\documentclass[10pt,a4paper]{amsart}
\linespread{1.2}
\usepackage[margin=3.3cm]{geometry}
\usepackage{lineno,hyperref}
\usepackage{amsmath}
\usepackage{amssymb}
\usepackage[numbers]{natbib}
\usepackage[mathscr]{euscript}
\usepackage{enumerate}
\usepackage{xspace}
\usepackage{color}
\usepackage{enumerate}
\usepackage{enumitem}

\newcommand{\q}{Q}
\DeclareMathAlphabet{\mathpzc}{OT1}{pzc}{m}{it}

\theoremstyle{plain}

\newtheorem{SA}{Standing Assumption}
\newtheorem{theorem}{Theorem}[section]
\newtheorem{lemma}[theorem]{Lemma}
\newtheorem{proposition}[theorem]{Proposition}
\newtheorem{corollary}[theorem]{Corollary}

\newtheorem{definition}[theorem]{Definition}

\newtheorem{remark}[theorem]{Remark}
\newtheorem{example}[theorem]{Example}

\renewenvironment{proof}{{\parindent 0pt \it{ Proof:}}}{\mbox{}\hfill\mbox{$\Box\hspace{-0.5mm}$}\vskip 16pt}

\newcommand{\of}{[\hspace{-0.065cm}[}
\newcommand{\gs}{]\hspace{-0.065cm}]}

\numberwithin{equation}{section}

\newcommand{\dd}{\operatorname{d}\hspace{-0.05cm}}

\newcommand{\cadlag}{c\`adl\`ag }
\newcommand{\p}{P}
\newcommand{\F}{\mathbf{F}}
\newcommand{\1}{\mathbf{1}}

\newcommand{\G}{\mathbf{G}}
\newcommand{\M}{\mathscr{M}^{\textup{sp}}}
\newcommand{\K}{\mathbf{K}}

\newcommand{\E}{E}

\begin{document}

\title[SPEMMs for \(\mathscr{H}\)-SII Models]{Structure Preserving Equivalent Martingale \\Measures for \(\mathscr{H}\)-SII Models}

\author[D. Criens]{David Criens}
\address{D. Criens - Technical University of Munich, Department of Mathematics, Germany}
\email{david.criens@tum.de}
\thanks{D. Criens - Technical University of Munich, Department of Mathematics, Germany,  \texttt{david.criens@tum.de}.}
\keywords{equivalent martingale measure, conditionally independent increments, stochastic volatility model} 

\subjclass[2010]{60G44, 60G48, 60G51, 91B70, 60G22}

\date{\today}
\maketitle

\frenchspacing
\pagestyle{myheadings}

	\begin{abstract}
		In this article we relate the set of structure preserving equivalent martingale measures \(\M\) for financial models driven by semimartingales with conditionally independent increments to a set of measurable and integrable functions \(\mathscr{Y}\). 
		More precisely, we prove that \(\M \not = \emptyset\) if, and only if, \(\mathscr{Y}\not = \emptyset\),
		and connect the sets \(\M\) and \(\mathscr{Y}\) to the semimartingale characteristics of the driving process.
		As examples we consider integrated L\'evy models with independent stochastic factors and time-changed L\'evy models and derive mild conditions for \(\M \not = \emptyset\).
	\end{abstract}

	\section{Introduction}\label{The Financial Model}
	A class of stochastic models which reflects many statistical observations and yet has good analytical properties is the class of so-called \(\mathscr{H}\)-SII models. The stock price process \(S\) is defined by 
	\(
	S = \exp(X),
	\)
	where \(X\) is a semimartingale with \(\mathscr{H}\)-conditionally independent increments (\(\mathscr{H}\)-SII). 
	Examples of \(\mathscr{H}\)-SII models are exponential L\'evy models and the stochastic volatility models suggested by \cite{RSSB:RSSB282,CGMY03,Heston01041993,stein1991stock}.
	
	We highlight that for pure-jump exponential L\'evy models Eberlein and Jacod \cite{EJ97} established a precise description of the set \(\M\) of SPEMMs in terms of a set of deterministic functions. 
	This result is 
	mathematically sharp and engages through its simple deterministic nature. 
	
	We show that such a result also holds for \(\mathscr{H}\)-SII models.
	More precisely, we prove that there exists a set of measurable and integrable functions \(\mathscr{Y}\) such that for each element in \(\mathscr{Y}\) there exists a corresponding measure in \(\M\) and vise versa. 
	
	To the best of our current knowledge, for \(\mathscr{H}\)-SII models the set \(\M\) was only studied for individual models, cf., e.g.,  \cite{doi:10.1080/14697680600573099,Kassberger2010,MAFI:MAFI175,doi:10.1137/S0040585X97981032}, and not from a general perspective.
	We stress that some key techniques of previous approaches to do not apply to a general setting.
	For example, in the discussion of \(\M\) for the Barndorf-Nielson and Shephard model in \cite{MAFI:MAFI175}, the following fact is used: If \(\xi\) is a process independent of a Brownian motion \(W\), then conditioned on \(\xi\) the random variable \(\int_0^T \xi_s \dd W_s\) is Gaussian distributed. 
	This claim relies on the fact that \(W\) stays a Brownian motion under the enlarged filtration which includes all informations on \(\xi\), cf. Appendix~\ref{scope}. 
	Using that \(\int_0^T\xi_s \dd W_s\) is Gaussian, the martingale property of a candidate density process for an element of \(\M\) can be computed directly. 
	In more general situations one cannot hope to perform that kind of computations. Hence, a more robust argumentation is necessary.

	At the core of the proof of Eberlein and Jacod \cite{EJ97} 
	is the fact that an exponential L\'evy process is a martingale if, and only if, it is a local martingale.
	This observation is also true in the case of \(\mathscr{H}\)-SIIs with absolutely continuous characteristics, cf. \cite{KMK10}. 
	By reducing the claim to semimartingales with independent increments (SIIs), for which the result was proven by Kallsen and Muhle-Karbe \cite{KMK} 
	exploiting a technique based on a change of measure, we generalize this observation to general \(\mathscr{H}\)-SIIs.
	In order to use this fact to construct a density processes of a measure in \(\M\), one has to show that the logarithm of a candidate density process is an \(\mathscr{H}\)-SII. 
	This, however, requires in depth measurability considerations, cf.~Appendix~\ref{MLemmata A}.
	On the other hand, to obtain necessary conditions for \(\M \not = \emptyset\), we benefit from Girsanov's theorem and deep results on local absolute continuity of laws of semimartingales as given in \cite{KLS-LACOM1}.

	Let us shortly summarize the structure of the article.
	In Section \ref{Semimartingales with conditionally independent increments} we introduce our mathematical setting. 
	Our main result is given in Section \ref{SPEMM}. We discuss the simplified situation of a quasi-left continuous driving process with continuous local martingale part in Section \ref{The Case of Local Characteristics}.
	In Section \ref{CS} we present examples such as a Black-Scholes-type model with independent stochastic volatility and an exponential L\'evy model with independent stochastic time-change. 
	The proof of our main result is given in Section~\ref{Proofs}.

	\section{Structure Preserving Equivalent Martingale Measures}\label{SCII}
	Let \(T > 0\) be a finite time horizon.
	All processes in this article are indexed on \([0, T]\).
	We fix a not-necessarily right-continuous filtration \((\mathscr{F}^{o}_t)_{t \in [0, T]}\) on a measurable space \((\Omega, \mathscr{F})\) and set \(\mathscr{F}_t \triangleq \mathscr{F}^{o}_{t+}.\)
	Throughout the entire article let \((\Omega, \mathscr{F}, \F \triangleq (\mathscr{F}_t)_{t \in [0, T]}, \p)\) be the underlying filtered probability space.
	Note that we do not assume the \emph{usual conditions}.
	For a careful discussion of the general theory of stochastic processes without assuming the usual conditions we refer to the monographs \cite{HWY,JS}. 
	
	Let \(\mathscr{H}\subseteq\mathscr{F}\) and consider the enlarged filtration \(\G \triangleq (\mathscr{G}_t)_{t \in [0, T]}\) given by
	\(
	\mathscr{G}_t \triangleq \mathscr{G}^o_{t+}\), where
	\(\mathscr{G}^o_t \triangleq \mathscr{F}^{o}_t \vee \mathscr{H}.\)
	We impose the following assumption on the underlying filtered space.
	\begin{SA}\label{SA1}
		The space \(\Omega\) is Polish and \(\mathscr{F}\) is its topological Borel \(\sigma\)-field. Moreover, for all \(t \in [0, T]\) the \(\sigma\)-fields \(\mathscr{H}\) and \(\mathscr{F}^{o}_t\) are countably generated.
	\end{SA}
	The following lemma shows that many \(\sigma\)-fields are countably generated.
	\begin{lemma}\label{lemma countable generated}
		Let \((Y_t)_{t \geq 0}\) be a right- or left-continuous process with values in a Polish space. Then for \(t \in [0, T]\) the \(\sigma\)-field \(\sigma(Y_s, s \in [0, t])\) is countably generated.
	\end{lemma}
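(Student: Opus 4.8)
The plan is to reduce the statement to two facts: (i) the $\sigma$-field $\sigma(Y_s, s\in[0,t])$ is generated by a \emph{countable} family of random variables with values in the Polish state space $E$, and (ii) any $\sigma$-field generated by countably many $E$-valued random variables is countably generated.

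For (i) I would distinguish the two continuity assumptions. In the right-continuous case, fix $t\in[0,T]$ and observe that for every $s\in[0,t)$ one may pick rationals $q_n\in\mathbb{Q}\cap(s,t)$ with $q_n\downarrow s$ (such a sequence exists since $(s,t)$ is a nonempty open interval), so that $Y_s=\lim_n Y_{q_n}$ pointwise on $\Omega$ by right-continuity; since pointwise limits of $E$-valued measurable maps into a metric space are measurable, $Y_s$ is $\sigma(Y_q,\,q\in\mathbb{Q}\cap[0,t])$-measurable. Consequently
\[
\sigma(Y_s,\,s\in[0,t])=\sigma\big(\{Y_q:q\in\mathbb{Q}\cap[0,t]\}\cup\{Y_t\}\big),
\]
and the generating family on the right-hand side is countable. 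The left-continuous case is handled symmetrically, approximating $Y_s$ for $s\in(0,t]$ by rationals in $[0,s)$ and adjoining the single variable $Y_0$; the case $t=0$ is trivial.

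For (ii) I would use that a Polish space is second countable: if $\mathscr{U}$ is a countable base of the topology of $E$, then $\mathscr{B}(E)=\sigma(\mathscr{U})$, hence for any countable family $(Z_n)_{n\in\mathbb{N}}$ of $E$-valued random variables one has $\sigma(Z_n,\,n\in\mathbb{N})=\sigma(\{Z_n^{-1}(U):n\in\mathbb{N},\,U\in\mathscr{U}\})$, and the latter generating set is countable. Combining (i) and (ii) yields the claim.

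The only point requiring genuine care — and really the only place the continuity hypothesis enters — is the treatment of the endpoint of the interval: in the right-continuous case the value $Y_t$ cannot be recovered as a limit of $Y_q$ with $q\in\mathbb{Q}\cap[0,t]$, and likewise $Y_0$ in the left-continuous case, so these single variables must be added to the countable generator by hand. Everything else is routine measure theory.
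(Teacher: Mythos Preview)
Your argument is correct and follows essentially the same route as the paper: the paper's one-line proof records the identity \(\sigma(Y_s,\,s\in[0,t])=\sigma(Y_{s\wedge t},\,s\in\mathbb{Q}_+)\), which is exactly your step (i), and tacitly leaves your step (ii) to the reader. The only cosmetic difference is that the paper's use of \(s\wedge t\) with \(s\) ranging over \emph{all} non-negative rationals automatically puts \(Y_t\) (and \(Y_0\)) into the countable generating family, so the endpoint bookkeeping you do by hand is absorbed into the indexing.
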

	\begin{proof}
		It suffices to note that \(\sigma(Y_s, s \in [0, t]) = \sigma(Y_{s \wedge t}, s \in\mathbb{Q}_+)\).
	\end{proof}
	
	An important consequence of Standing Assumption \ref{SA1} is the existence of a regular conditional probability \(P(\cdot|\mathscr{H})(\cdot)\) from \((\Omega, \mathscr{H})\) to \((\Omega, \mathscr{F})\), cf., e.g., \cite[Theorem 9.2.1]{stroock2010}. 
	More precisely, \(P(\cdot|\mathscr{H})(\cdot)\) satisfies the following:
	\begin{enumerate}
		\item[\textup{(i)}]
		For all \(\omega \in \Omega\), \(A \mapsto P(A|\mathscr{H})(\omega)\) is a probability measure on \((\Omega, \mathscr{F})\).
		\item[\textup{(ii)}]
		For all \(A \in \mathscr{F}\), \(\omega \mapsto P(A|\mathscr{H})(\omega)\) is \(\mathscr{H}\)-measurable.
		\item[\textup{(iii)}]
		For all \(A \in \mathscr{F}\) the random variable \(P(A|\mathscr{H})\) is a \(P\)-version of
		\(
		E[\1_A|\mathscr{H}].
		\)
		\item[\textup{(iv)}]
		There exists a \(\p\)-null set \(N\in \mathscr{H}\) such that for all \(\omega \in \complement N\) and all \(G \in \mathscr{H}\) we have
		\begin{align}\label{varad}
		P(G|\mathscr{H})(\omega) = \1_G(\omega).
		\end{align}
	\end{enumerate}
	Part (iv) uses the assumption that the \(\sigma\)-field \(\mathscr{H}\) is countably generated.
	Let us shortly note two elementary observations.
	\begin{remark}\label{Remark SA}
		\begin{enumerate}
			\item[\textup{(i)}]
			For all \(\mathscr{F}\)-measurable functions \(Y \colon \Omega \to \mathbb{R}^+\) the random variable \(\int Y (\omega) P(\dd \omega|\mathscr{H})\)
			is a \(\p\)-version of the conditional expectation \(\E[Y|\mathscr{H}]\).
			\item[\textup{(ii)}]
			For all \(P\)-a.s. events \(A \in \mathscr{F}\) there exists a \(\p\)-null set \(N_A \in \mathscr{H}\) such that for all \(\omega \in \complement N_A\) we have \(P(A|\mathscr{H})(\omega) =1\).
		\end{enumerate}
	\end{remark}
	\subsection{Semimartingales with \(\mathscr{H}\)-Conditionally Independent Increments}\label{Semimartingales with conditionally independent increments}
	As observed by Grigelionis \cite{Grigelionis1975} semimartingales with \(\mathscr{H}\)-conditionally independent increments (\(\mathscr{H}\)-SIIs) can be characterized by measurability properties of their characteristics. 
	Before we give a precise statement let us clarify some terminology. 
	We say that \(B \in \mathscr{V}\) has an \(\mathscr{H}\)-measurable version, if for each \(t \in [0, T]\) the random variable \(B_t\) has an \(\mathscr{H}\)-measurable version.
	Denote by \(\mathscr{I}\) the set of all Borel functions \(g \colon \mathbb{R} \to\mathbb{R}\) with \(|g(x)| \leq 1 \wedge |x|^2\).
	We say that a compensator \(\nu\) of a random measure of jumps has an \(\mathscr{H}\)-measurable version, if for all \(t \in [0, T]\) and all \(g \in \mathscr{I}\) the random variable \(\nu([0, t] \times g)\) has an \(\mathscr{H}\)-measurable version.
	
	In this article we will fix a truncation function \(h \colon \mathbb{R} \to \mathbb{R}\). Whenever we talk about (semimartingale) characteristics, we refer to the characteristics corresponding to \(h\).
	
	\begin{definition}
		We call a real-valued \((\G,\p)\)-semimartingale which starts at zero an \((\mathscr{H}, \F, \p)\)-SII if its \((\G, \p)\)-characteristics have an \(\mathscr{H}\)-measur\-able \(\p\)-version.
	\end{definition}

	The following lemma can be used to deduce claims concerning \(\mathscr{H}\)-SIIs from results concerning semimartingales with independent increments.
	It is a consequence of \cite[Lemma II.6.13, Corollary II.6.15]{JS}. 
	\begin{lemma}\label{lemma kernel}
		
		A process \(Y\) is an \((\mathscr{H}, \F, \p)\)-SII if and only if there exists a \(\p\)-null set \(N \in \mathscr{F}\) such that for all \(\omega \in \complement N\) the process \(Y\)
		is a \((\{\Omega, \emptyset\}, \G, \p(\cdot|\mathscr{H})(\omega))\)-SII.
		In this case, the \((\G, \p(\cdot|\mathscr{H})(\omega))\)-characteristics of \(Y\) coincide with the \((\G, \p)\)-characteristics.
	\end{lemma}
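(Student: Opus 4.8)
The plan is to move between $\p$ and the conditional laws $\p(\cdot\mid\mathscr{H})(\omega)$ via the disintegration $\p(A)=\int\p(A\mid\mathscr{H})(\omega)\,\p(\dd\omega)$, $A\in\mathscr{F}$ (available since $\mathscr{H}$ is countably generated, so the regular conditional probability $\p(\cdot\mid\mathscr{H})(\cdot)$ with properties~\textup{(i)--(iv)} exists), and to combine this with the classical fact that, for a $\G$-semimartingale, conditional independence of the increments given a sub-$\sigma$-field is equivalent to measurability of the characteristics with respect to that $\sigma$-field; for the trivial $\sigma$-field $\{\Omega,\emptyset\}$ this is the characterisation of semimartingales with independent increments through deterministic characteristics. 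Both inputs, together with the required measurability bookkeeping, are exactly what \cite[Lemma II.6.13, Corollary II.6.15]{JS} provide. The one structural remark that makes the reduction legitimate is that, by construction, $\mathscr{H}\subseteq\mathscr{G}_0$, so $\mathscr{H}$ is part of the initial $\sigma$-field of $\G$ and conditioning on it neither changes the filtration nor destroys the $(\G,\cdot)$-semimartingale property.

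For the ``only if'' direction, let $Y$ be an $(\mathscr{H},\F,\p)$-SII, i.e.\ a $(\G,\p)$-semimartingale starting at $0$ whose $(\G,\p)$-characteristics $(B,C,\nu)$ admit $\mathscr{H}$-measurable $\p$-versions $(\widetilde B,\widetilde C,\widetilde\nu)$. Applying \cite[Lemma II.6.13, Corollary II.6.15]{JS} to the $(\G,\p)$-semimartingale $Y$ and the kernel $\p(\cdot\mid\mathscr{H})(\cdot)$ yields a $\p$-null set $N\in\mathscr{F}$ such that for every $\omega\in\complement N$ the process $Y$ is a $(\G,\p(\cdot\mid\mathscr{H})(\omega))$-semimartingale whose $(\G,\p(\cdot\mid\mathscr{H})(\omega))$-characteristics are a $\p(\cdot\mid\mathscr{H})(\omega)$-version of $(B,C,\nu)$. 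It remains to see that this version is deterministic: by property~\textup{(iv)} of the regular conditional probability (equivalently Remark~\ref{Remark SA}(ii)), every $\mathscr{H}$-measurable random variable is $\p(\cdot\mid\mathscr{H})(\omega)$-a.s.\ equal to its value at $\omega$, for $\omega$ outside an $\mathscr{H}$-measurable $\p$-null set; applying this to a countable family of pairs $(t,g)$ with $t$ in a countable dense subset of $[0,T]$ and $g$ in a countable subset of $\mathscr{I}$ that determines $(\widetilde B,\widetilde C,\widetilde\nu)$ (using right-continuity in $t$, a monotone-class argument in $g$, and Lemma~\ref{lemma countable generated}), and taking the union of the corresponding null sets, one enlarges $N$ to an $\mathscr{F}$-null set off which $(\widetilde B,\widetilde C,\widetilde\nu)$ is deterministic under $\p(\cdot\mid\mathscr{H})(\omega)$. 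Hence for $\omega\in\complement N$ the process $Y$ is a $(\{\Omega,\emptyset\},\G,\p(\cdot\mid\mathscr{H})(\omega))$-SII with characteristics equal to the $(\G,\p)$-characteristics of $Y$.

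The ``if'' direction runs the same two results backwards. Assume there is a $\p$-null set $N\in\mathscr{F}$ such that for all $\omega\in\complement N$ the process $Y$ is a $(\{\Omega,\emptyset\},\G,\p(\cdot\mid\mathscr{H})(\omega))$-SII, i.e.\ a $(\G,\p(\cdot\mid\mathscr{H})(\omega))$-semimartingale with $\p(\cdot\mid\mathscr{H})(\omega)$-a.s.\ deterministic characteristics $(\beta(\omega),\gamma(\omega),m(\omega))$. Since the characteristics of $Y$ under $\p(\cdot\mid\mathscr{H})(\omega)$ are recovered from $Y$ as limits of $\p(\cdot\mid\mathscr{H})(\omega)$-conditional expectations, which by property~\textup{(ii)} of the regular conditional probability depend $\mathscr{H}$-measurably on $\omega$, the triple $\omega\mapsto(\beta(\omega),\gamma(\omega),m(\omega))$ can be chosen $\mathscr{H}$-measurable; this is the measurable-selection content behind \cite[Lemma II.6.13]{JS}. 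That lemma then identifies this $\mathscr{H}$-measurable random triple as a $(\G,\p)$-version of the characteristics of $Y$, so in particular $Y$ is a $(\G,\p)$-semimartingale whose $(\G,\p)$-characteristics have an $\mathscr{H}$-measurable $\p$-version; that is, $Y$ is an $(\mathscr{H},\F,\p)$-SII, and the two families of characteristics coincide as asserted.

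The main obstacle is not probabilistic but is the measurability bookkeeping: ensuring on the one hand that the $\p$-null exceptional sets in $\mathscr{F}$ produced by the general disintegration can be absorbed into $\p$-null sets described through $\mathscr{H}$, and on the other hand that the fibrewise deterministic characteristics assemble into one $\mathscr{H}$-measurable object. This is precisely where Standing Assumption~\ref{SA1} (Polishness and countable generation of $\mathscr{H}$ and the $\mathscr{F}^{o}_t$) enters, and it is exactly what has been packaged into \cite[Lemma II.6.13, Corollary II.6.15]{JS}; accordingly, in the write-up the lemma follows by quoting these results once the observation $\mathscr{H}\subseteq\mathscr{G}_0$ has been recorded.
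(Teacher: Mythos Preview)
Your proposal is correct and takes essentially the same approach as the paper: both argue that the lemma is a consequence of \cite[Lemma II.6.13, Corollary II.6.15]{JS}, with the key structural observation that $\mathscr{H}\subseteq\mathscr{G}_0$ so that conditioning on $\mathscr{H}$ fits the framework of those results. The paper simply records this citation in one line, whereas you additionally unpack the two directions and the measurability bookkeeping; this is helpful exposition but not a different method.
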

	
	\subsection{Structure Preserving Equivalent Martingale Measures}\label{SPEMM}
	Let us now describe the class of financial models considered in this article. 
	\begin{SA}\label{SA}
		The process \(X\) is an \((\mathscr{H}, \F, \p)\)-SII and also an \((\F, \p)\)-semimartingale whose \((\F, \p)\)- and \((\G, \p)\)-characteristics coincide. 
	\end{SA}
	We discuss Standing Assumption \ref{SA} in Appendix \ref{scope} and give examples.
	The characteristics of \(X\) are denoted by \((B^X, C^X, \nu^X)\).
	Thanks to \cite[Proposition II.2.9]{JS} we may w.l.o.g. assume that
	\begin{align}\label{a leq 1}
	\{a \leq 1\} = \Omega \times [0, T],\ \textup{ where }\ a_t \triangleq \nu^X(\{t\}\times\mathbb{R}).
	\end{align}
	The stock price process \(S\) of an \(\mathscr{H}\)-SII model is given~by 
	\begin{align*}
	S_t \triangleq e^{X_t},\ \ t \in [0, T]. 
	\end{align*}
	Clearly, the assumption \(S_0 = 1\) is no restriction and serves only the purpose of notational convenience. 
	Let us now define our key objects of interest.
	
	\begin{definition}\label{def M}
		We denote by \(\M\) the set of \emph{structure preserving equivalent martingale measures}, i.e. all probability measure \(\q\) on \((\Omega, \mathscr{F})\) such that the following holds:
		\begin{enumerate}
			\item[\textup{(i)}]
			\(\q \sim \p\).
			\item[\textup{(ii)}]\(S\) is an \((\F, \q)\)-martingale.
			\item[\textup{(iii)}] \(X\) is an \((\mathscr{H}, \F, \q)\)-SII. 
			\item[\textup{(iv)}] The \((\F, \q)\)- and \((\G, \q)\)-characteristics of \(X\) coincide.
		\end{enumerate}
	\end{definition}
	In our setting we do not need to distinguish between structure preserving equivalent \emph{true, local} or \emph{sigma} martingale measures, since all exponential \(\mathscr{H}\)-SIIs which are sigma martingales are martingales, cf. Lemma \ref{eq coro} below.

	\begin{definition}\label{def Y}
		We define \(\mathscr{Y}\) to be the set of all tuple \((\beta, U)\) which satisfy the following: \(\beta\) is a real-valued
		\(\F\)-predictable process and \(U\) is a \([0, \infty)\)-valued \(\mathscr{P}(\F)\otimes\mathscr{B}\)-measurable function such that
		\begin{enumerate}
			\item[\textup{(i)}]
			\(\{U > 0\} = \{a' \leq 1\} = \Omega \times [0, T]\) and \(\{a = 1\} = \{a' = 1\}\), where
			\(a'_t \triangleq \int_{\mathbb{R}} U(t, x) \nu^X(\{t\}\times \dd x).\)
			\item[\textup{(ii)}]
			\(\p\)-a.s. it holds that \(|h(x)(U - 1)| \star \nu^X_T < \infty\) and
			\begin{align}\label{H}
			H_T \triangleq\beta^2 \cdot &\hspace{0.04cm} C^X_T + \left(1 - \sqrt{U}\hspace{0.04cm} \right)^2 \star \nu^X_T + \sum_{s \in [0, T]} \left( \sqrt{1 - a_s} - \sqrt{1 - a_s'}\hspace{0.04cm} \right)^2 < \infty. 
			\end{align}
			\item[\textup{(iii)}] 
			\(\p\)-a.s. it holds that \((e^{x} - 1) U \1_{\{x > 1\}} \star \nu^X_T < \infty\) and that for all \(t \in [0, T]\)
			\begin{equation}\label{MPRE}
			\begin{split}
			B^X_t &+  \bigg(\beta+ \frac{1}{2} \bigg) \cdot C^X_t + \big((e^{x} - 1)U -h(x)\big) \star \nu^X_t
			\\&+\sum_{s \in [0, t]} \left(\log(1 + \widetilde{V}_s) - \widetilde{V}_s\right) = 0,
			\end{split}
			\end{equation}
			where \(\widetilde{V}_t \triangleq \int_\mathbb{R} (e^{x} - 1) U(t, x) \nu^X(\{t\}\times\dd x)\).
			\item[\textup{(iv)}]
			the modified characteristics
			\begin{equation}\label{version H mb}
			\begin{split}
			B\triangleq B^X  +\beta \cdot C^X + h(x)(U - 1) \star \nu^X,\qquad
			C \triangleq C^X,\qquad
			\nu \triangleq U\cdot \nu^X,
			\end{split}
			\end{equation}
			have a \(P\)-version which is \(\mathscr{H}\)-measurable.
		\end{enumerate}
	\end{definition}
	Motivated by Girsanov's theorem \cite[Theorem III.3.24]{JS}, the elements in \(\mathscr{Y}\) are called \emph{Girsanov quantities}.
	The function \(U\) is used to influence the jump structure of \(X\) and both \(U\) and \(\beta\) change the drift of \(X\).
	If \(U\) is given by
	\(
	U(t, x) = \frac{e^{\beta_t x}}{1 + \widehat{W}_t}\), where \(\widehat{W}_t \triangleq \int_\mathbb{R} \left(e^{\beta_t x} - 1\right) \nu^X(\{t\} \times \dd x),\)
	then \((\beta, U)\) correspond to the famous \emph{Esscher measure}, cf., e.g., \cite{KS(2002b)}.
	The equation \eqref{MPRE} is often called \emph{market price of risk equation (MPRE)}.

	The set \(\{a > 0\}\) is thin. Consequently, as a section of a thin set, \(\{t \in [0, T]\colon a_t(\omega) > 0\}\) is at most countable and the sums in Definition \ref{def Y} (ii) and (iii) are well-defined. 
	Part (ii) of Definition \ref{def Y} implies that \(B \in \mathscr{V}(\F, \p)\).
	Note that
	\begin{align*}
	(1 \wedge |x|^2) U \star \nu^X_t \leq 4 (1 \wedge |x|^2) \star \nu^X_t + 4 \left(1 - \sqrt{U}\right)^2 \star \nu^X_t.
	\end{align*}
	Hence, \(U \cdot \nu^X\) makes sense as a candidate for a compensator.

	Now, we are in the position to state our main result, which generalizes \cite[Proposition 1]{EJ97} to \(\mathscr{H}\)-SII models.
	For a detailed proof we refer to Section \ref{Proofs} below.
	\begin{theorem}\label{MT2}
		We have
		\begin{align*}
		\mathscr{Y} \not = \emptyset\hspace{0.1cm} \Longleftrightarrow\hspace{0.1cm} \M \not = \emptyset.
		\end{align*}
		Moreover, the following holds:
		\begin{enumerate}
			\item[\textup{(i)}] For each \((\beta, U)\in \mathscr{Y}\) there exists a \(\q\in \M\) such that the \((\F, \q)\)- and \((\G, \q)\)-characteristics of \(X\) are given by \eqref{version H mb}.
			\item[\textup{(ii)}]
			For each \(\q \in \M\) there exists a pair \((\beta, U) \in \mathscr{Y}\) such that \(X\) has \((\F, \q)\)- and \((\G, \q)\)-characteristics given by \eqref{version H mb}.
		\end{enumerate}
	\end{theorem}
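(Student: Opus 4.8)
The plan is to prove the two refined statements (i) and (ii) separately; together they give the equivalence. In both directions we pass through an explicit candidate density process and Girsanov's transformation of the semimartingale characteristics, using the conditioning device of Lemma~\ref{lemma kernel} to move martingale properties between the world of SIIs and that of \(\mathscr{H}\)-SIIs.

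\textbf{Sufficiency (statement (i)).} Fix \((\beta,U)\in\mathscr{Y}\). From the canonical decomposition of \(X\), with \(\mu^{X}\) its random measure of jumps and \(\nu^{X}\) the compensator, I would form the local martingale \(N \triangleq \beta\cdot X^{c} + (U-1)\star(\mu^{X}-\nu^{X})\), corrected at the (at most countably many) fixed times where \(a_{s}>0\), and take \(Z \triangleq \mathscr{E}(N)\) as candidate density; conditions (i)--(ii) of Definition~\ref{def Y} are exactly what make \(N\) well defined and \(Z\) strictly positive with \(Z_{0}=1\). The first key step is to upgrade \(Z\) from a local to a \emph{true} \((\F,\p)\)-martingale. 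A direct computation of \(E[Z_{T}]\) is hopeless at this level of generality (this is precisely the limitation of the Barndorff-Nielsen--Shephard-type arguments recalled in the introduction), so instead I would condition on \(\mathscr{H}\): by Lemma~\ref{lemma kernel}, \(X\) is a genuine SII under \(\p(\cdot|\mathscr{H})(\omega)\) with unchanged characteristics; after the measurability bookkeeping of Appendix~\ref{MLemmata A} one verifies that \(\log Z\) is an \(\mathscr{H}\)-SII, hence \(Z\) is an exponential SII that is a local martingale under \(\p(\cdot|\mathscr{H})(\omega)\), hence a true martingale there by Lemma~\ref{eq coro} (which rests on \cite{KMK}); integrating over \(\p(\cdot|\mathscr{H})\) via Remark~\ref{Remark SA}(i) transfers the martingale property to \(\p\). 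Setting \(\dd\q/\dd\p \triangleq Z_{T}\) we obtain \(\q\sim\p\) from \(Z_{T}>0\), Girsanov's theorem \cite[Theorem III.3.24]{JS} shows the \((\F,\q)\)-characteristics of \(X\) are \eqref{version H mb}, and condition (iv) of Definition~\ref{def Y} makes them \(\mathscr{H}\)-measurable, so \(X\) is an \(\mathscr{H}\)-SII under \(\q\); arguing once more via Lemma~\ref{lemma kernel} on the enlarged filtration together with the ``characteristics coincide'' hypothesis gives Definition~\ref{def M}(iii)--(iv). Finally, the MPRE \eqref{MPRE} says precisely that \(e^{X}\) has vanishing drift under \(\q\), hence is an \((\F,\q)\)-sigma-martingale, hence an \((\F,\q)\)-martingale by a further application of Lemma~\ref{eq coro} --- Definition~\ref{def M}(ii).

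\textbf{Necessity (statement (ii)).} Fix \(\q\in\M\) with density process \(Z\). Since \(\q\sim\p\), the structure of densities of semimartingale laws together with the local-absolute-continuity criteria of \cite{KLS-LACOM1} and Girsanov's theorem produce a predictable \(\beta\) and a nonnegative \(\mathscr{P}(\F)\otimes\mathscr{B}\)-measurable \(U\) such that the \((\F,\q)\)-characteristics of \(X\) equal \eqref{version H mb} and the Hellinger-type quantity \(H_{T}\) of \eqref{H} is \(\p\)-a.s.\ finite; this finiteness, together with \(\q\) being a probability measure --- which forces \(\{a'\le1\}=\Omega\times[0,T]\) and \(\{a=1\}=\{a'=1\}\) in view of \eqref{a leq 1} --- yields conditions (i)--(ii) of Definition~\ref{def Y}, while condition (iv) holds because \(X\) is an \(\mathscr{H}\)-SII under \(\q\) with coinciding \((\F,\q)\)- and \((\G,\q)\)-characteristics, so \eqref{version H mb} admits an \(\mathscr{H}\)-measurable version. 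For condition (iii): \(S=e^{X}\) being an \((\F,\q)\)-martingale, it is a special semimartingale with null drift, and expanding the drift of \(e^{X}\) through It\^o's formula in terms of the \(\q\)-characteristics \eqref{version H mb} gives exactly the identity \eqref{MPRE}, the sum over \(s\in[0,t]\) accounting for the fixed times of discontinuity.

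\textbf{The main obstacle.} The crux is the first key step of the sufficiency direction: showing that \(Z\) is a genuine martingale \emph{and} that \(\log Z\) is an \(\mathscr{H}\)-SII. Everything hinges on the reduction to SIIs via \(\p(\cdot|\mathscr{H})\) and the SII martingale criterion of \cite{KMK}, and --- less visibly but just as crucially --- on choosing the various \(\p\)-null exceptional sets and \(\mathscr{H}\)-measurable versions in a mutually compatible way, which is exactly the role of Appendix~\ref{MLemmata A}. A secondary difficulty is that we work without the usual conditions, so some care is needed with the right-continuity of \(Z\) and with the interplay between \(\mathscr{F}^{o}_{t}\) and \(\mathscr{F}_{t}\) throughout.
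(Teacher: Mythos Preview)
Your proposal is correct and follows essentially the same route as the paper: the candidate density \(Z=\mathscr{E}(N)\), the upgrade from local to true martingale by showing \(\log Z\) is an \(\mathscr{H}\)-SII and invoking the SII martingale criterion (this is exactly the content of Lemmas~\ref{eq coro} and~\ref{lemma Z}), Girsanov in both filtrations to identify the \(\q\)-characteristics, and in the necessity direction Girsanov plus \cite{KLS-LACOM1} for the Hellinger integrability \(H_T<\infty\) and \cite{KS(2002b)} for the MPRE.

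One step in the necessity direction is underplayed and is not as automatic as you suggest. The pair \((\beta,U^{*})\) coming out of Girsanov does not satisfy condition~(i) of Definition~\ref{def Y} as an \emph{identity} on \(\Omega\times[0,T]\): the equalities \(\{a=1\}=\{a'=1\}\) and \(\{a'\le 1\}=\Omega\times[0,T]\) hold only up to \(\p\)-evanescence, and this is not a trivial consequence of ``\(\q\) being a probability measure''. The paper establishes it by exhausting the thin sets \(\{a=1\}\) and \(\{a^{*}=1\}\) by sequences of \(\F\)-predictable times, applying \cite[Proposition~II.1.17]{JS} and \cite[Theorem~III.3.4]{JS} at each such time, and using \(\q\sim\p\) to transfer the resulting a.s.\ statements between measures; the inclusion \(\{a^{*}>1\}\) in a \(\p\)-evanescence set is handled separately. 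Only after this does one redefine \(U\) to equal \(1\) on the exceptional set, so that condition~(i) holds identically. Without this modification \((\beta,U^{*})\) is not literally an element of \(\mathscr{Y}\), so your argument for (ii) is incomplete as stated.
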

	We stress that the integrability assumptions in the definition of \(\mathscr{Y}\) have an almost sure character in contrast to classical exponential moment conditions of Novikov-type, which are typically imposed to guarantee the existence of an EMM.
	
	We give a short outline of the proof of Theorem \ref{MT2}.
	For given Girsanov quantities \((\beta, U)\) we may define the candidate density process 
	\begin{align}\label{ZK}
	Z \triangleq \mathscr{E}\bigg(\beta \cdot X^{c} + \bigg\{U - 1 + \frac{a' - a}{1 - a}\1_{\{a < 1\}}\bigg\} \star \big(\mu^X - \nu^{X}\big)\bigg).
	\end{align}
	We show in Lemma \ref{lemma Z} below that (i), (ii) and (iv) of Definition \ref{def Y} imply that \(Z\) is a positive martingale. 
	The proof is based on the observation that \(\log Z\) is an \(\mathscr{H}\)-SII and that exponential \(\mathscr{H}\)-SIIs are martingales if they are local martingales.
	Now, we define a candidate measure \(\q\) for \(\M\) by \(\q(G) = E_P[Z_T \1_G]\) for \(G \in \mathscr{F}\). 
	
	On the infinite time horizon \(Z\) may not be a uniformly integrable martingale. This is the only point where the proof of sufficiency depends on the finite time horizon. 
	However, when we consider an infinite time horizon, we can define the consistent family \((\mathscr{F}_t, \q_t)_{t \geq 0}\) by \(Q_t(G) = E^P[Z_t \1_G]\) for \(G \in \mathscr{F}_t\).
	Now, if the filtered space \((\Omega, \mathscr{F}, \F)\) allows extensions of consistent families, there exists a probability measure \(\q\) on \((\Omega, \mathscr{F})\) such that \(Q = Q_t\) on \(\mathscr{F}_t\) for all \(t \in [0, \infty)\). Since \(Z\) is positive, this implies that \(Q\) and \(P\) are locally equivalent. 
	These considerations lead to a \emph{local} version of Theorem \ref{MT2}. We stress that classical path spaces allow the extension of consistent families, cf. \cite[Proposition 3.9.17]{Bichteler02}.
	
	Checking that \(Q\) satisfies (ii) and (iii) of Definition \ref{def M} is identical for the finite and the infinite time horizon.
	
	Let us also comment on the converse direction.
	If \(\q\in \M\), then Girsanov's theorem yields the existence of candidate Girsanov quantities \((\beta, U)\). The integrability conditions follow from general results on absolute continuity of laws of semimartingales and the equivalence of \(\p\) and \(\q\) allows a modification of \((\beta, U)\) such that Definition \ref{def Y} (i) is satisfied. These results can be applied irrespective of a finite or an infinite time horizon.

	\subsection{\(\mathscr{H}\)-SII Models with Continuous Local Martingale Part}\label{The Case of Local Characteristics}
	Let us shortly discuss a simplified situation
	in which \(X\) has a non-trivial continuous local martingale part. 
	
	It is well-known that the \((\F, \p)\)-characteristics of \(X\) have a decomposition 
	\begin{align*}
	B^X = b^X \cdot A^X,\quad C^X = c^X \cdot A^X,\quad \nu^X = F^X \otimes A^X, 
	\end{align*}
	cf. \cite[Proposition II.2.9]{JS}.
	Here, \(A^X\) is an \(\F\)-predictable process in \(\mathscr{A}^+_{\textup{loc}}(\F, \p)\), \(b^X\) is an \(\F\)-predictable process, \(c^X\) is an \(\F\)-predictable non-negative process and \(F^X_{\omega, t}(\dd x)\) is a transition kernel from \((\Omega \times [0, T], \mathscr{P}(\F))\) to \((\mathbb{R}, \mathscr{B})\).
	We call \((b^X, c^X, F^X; A^X)\) \emph{local \((\F, \p)\)-characteristics of \(X\)}. 
	Thanks to Standing Assumption \ref{SA}, \((b^X, c^X, F^X; A^X)\) are also local \((\G, \p)\)-characteristics of \(X\).
	
	\begin{corollary}\label{concrete coro}
		Suppose that \(\nu^X(\{t\} \times \mathbb{R}) = 0\) for all \(t \in [0, T]\), that
		\(c^{X} \not = 0\) and that there exists an \(\mathscr{H}\otimes \mathscr{B}([0, T]) \otimes \mathscr{B}\)- and \(\mathscr{P}(\F)\otimes\mathscr{B}\)-measurable positive function \(U\) such that \(P\)-a.s.
		\begin{equation*} 
		\begin{split}
		|h(x) (U - 1)|\star \nu^X_T &+ |e^x - 1|U\1_{\{x > 1\}}\star \nu^X_T
		+ \beta^2 \cdot C^X_T+ \left(1 - \sqrt{U}\right)^2 \star \nu^X_T < \infty,
		\end{split}
		\end{equation*}
		where
		\begin{align}\label{beta explicit}
		\beta 
		\triangleq
		- \frac{1}{c^{X}} \left( \frac{1}{2} c^{X} + b^{X} + \int_{\mathbb{R}} \big((e^x - 1)U(\cdot, x) - h(x)\big) F^{X}(\dd x)\right),
		\end{align}
		then \(\M \not = \emptyset\).
		Moreover, there exists a \(\q \in \M\) such that \(X\) has local \((\F, \q)\)- and local \((\G, \q)\)-characteristics \((b^{\hspace{0.0cm}\q, X}, c^{X}, F^{\q,X}; A^{X})\), where
		\begin{align*} 
		b^{\hspace{0.0cm}\q, X} = -\left( \frac{1}{2} c^{X} + \int_{\mathbb{R}} \big(e^x - 1 - h(x)\big)U(\cdot, x)F^{X}(\dd x)\right) 
		\end{align*}
		and \(F^{\q, X}(\dd x) = U(\cdot, x) F^X(\dd x)\).
	\end{corollary}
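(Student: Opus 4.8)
The plan is to show that the pair \((\beta, U)\), with \(\beta\) given by \eqref{beta explicit}, is an element of \(\mathscr{Y}\), and then to read off the conclusion from Theorem \ref{MT2}. Verifying Definition \ref{def Y} is straightforward once one observes that the hypothesis \(\nu^X(\{t\}\times\mathbb{R}) = 0\) forces \(a_t = 0\), and hence also \(a'_t = \int_{\mathbb{R}} U(t,x)\,\nu^X(\{t\}\times\dd x) = 0\), for every \(t\). Therefore \(\{a'\le 1\}=\Omega\times[0,T]\), \(\{a=1\}=\emptyset=\{a'=1\}\), and by positivity of \(U\) also \(\{U>0\}=\Omega\times[0,T]\), which is Definition \ref{def Y}(i); moreover the sums over \(s\in[0,T]\) in \eqref{H} and \eqref{MPRE} vanish, so that (ii) reduces to the \(\P\)-a.s. finiteness of \(\beta^2\cdot C^X_T+(1-\sqrt U)^2\star\nu^X_T\) and of \(|h(x)(U-1)|\star\nu^X_T\) — both among the standing hypotheses of the corollary. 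For (iii), \eqref{MPRE} collapses, after passing to the local characteristics \(B^X=b^X\cdot A^X\), \(C^X=c^X\cdot A^X\), \(\nu^X=F^X\otimes A^X\), to the requirement that the \(\F\)-predictable process \(b^X+(\tfrac12+\beta)c^X+\int_{\mathbb{R}}\bigl((e^x-1)U(\cdot,x)-h(x)\bigr)F^X(\dd x)\) vanish \(A^X\)-a.e., and solving this for \(\beta\) (which is legitimate because \(c^X\neq 0\)) gives exactly \eqref{beta explicit}. That \(\beta\) is \(\F\)-predictable and \(\P\)-a.s. finite, and that \((e^x-1)U\1_{\{x>1\}}\star\nu^X_T<\infty\), then follow from the integrability hypotheses by routine estimates, using in particular the bound \((1\wedge|x|^2)U\star\nu^X\le 4(1\wedge|x|^2)\star\nu^X+4(1-\sqrt U)^2\star\nu^X\) recorded after Definition \ref{def Y}.

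The only genuinely delicate point is the measurability condition Definition \ref{def Y}(iv): the modified characteristics \(B=B^X+\beta\cdot C^X+h(x)(U-1)\star\nu^X\), \(C=C^X\), \(\nu=U\cdot\nu^X\) must possess \(\mathscr{H}\)-measurable \(\P\)-versions. For \(C=C^X\) this is contained in the assumption that \(X\) is an \((\mathscr{H},\F,\P)\)-SII. For \(\nu\) and \(B\) I would invoke the extra hypothesis that \(U\) is \(\mathscr{H}\otimes\mathscr{B}([0,T])\otimes\mathscr{B}\)-measurable: integrating a function that is \(\mathscr{H}\)-measurable in \(\omega\) and Borel in \((t,x)\) against the random measure \(\nu^X\) — which has an \(\mathscr{H}\)-measurable version precisely because \(X\) is an \(\mathscr{H}\)-SII — produces an \(\mathscr{H}\)-measurable process, which is the content of the measurability lemmata of Appendix \ref{MLemmata A}. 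For \(B\) it is convenient to first substitute \eqref{beta explicit}; this makes \(B^X+\beta\cdot C^X+h(x)(U-1)\star\nu^X\) collapse to \(-\tfrac12 C^X+(h(x)-e^x+1)U\star\nu^X\), which is manifestly of the type just described. I expect this verification — rather than any of the analytic estimates — to be the main obstacle, since it is exactly the place where the interplay between \(\F\)-predictability and \(\mathscr{H}\)-measurability has to be handled with care.

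Once \((\beta,U)\in\mathscr{Y}\) is established, Theorem \ref{MT2} gives \(\M\neq\emptyset\) and, by its part (i), a measure \(\q\in\M\) whose \((\F,\q)\)- and \((\G,\q)\)-characteristics of \(X\) are \eqref{version H mb}. Expressing \eqref{version H mb} relative to the same predictable reference process \(A^X\) yields \(c^{\q,X}=c^X\) and \(F^{\q,X}(\dd x)=U(\cdot,x)F^X(\dd x)\), while \(b^{\q,X}=b^X+\beta c^X+\int_{\mathbb{R}}h(x)(U(\cdot,x)-1)F^X(\dd x)\); substituting \eqref{beta explicit} and cancelling the term \(b^X\) leaves \(b^{\q,X}=-\tfrac12 c^X-\int_{\mathbb{R}}(e^x-1-h(x))U(\cdot,x)F^X(\dd x)\), which is the asserted formula. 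Finally, \(S=e^X\) being an \((\F,\q)\)-martingale is part of the statement \(\q\in\M\) and requires no further argument.
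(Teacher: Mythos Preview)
Your proposal is correct and follows essentially the same approach as the paper: verify that the pair \((\beta,U)\) lies in \(\mathscr{Y}\) and then invoke Theorem~\ref{MT2}. The paper's own proof is much terser---it simply asserts that (i)--(iii) of Definition~\ref{def Y} hold by assumption and that (iv) follows from Lemma~\ref{ML}(i)---whereas you spell out why \(a\equiv 0\) trivializes (i) and the sums in (ii)--(iii), and you take the slight shortcut for (iv) of first collapsing \(B\) to \(-\tfrac12 C^X+(h(x)-e^x+1)U\star\nu^X\) before appealing to the measurability lemmata; both routes amount to the same invocation of Appendix~\ref{MLemmata A}.
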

	\begin{proof}
		If \((\beta, U) \in \mathscr{Y}\), then Theorem \ref{MT2} implies the claims.
		It is assumed that \((\beta, U)\) satisfies (i), (ii) and (iii) of Definition \ref{def Y}. Moreover, \((\beta, U)\) also satisfies (iv) thanks to Lemma \ref{ML} (i) in Appendix \ref{MLemmata A}. This concludes the proof.
		~\end{proof}
	If we may choose \(U = 1\) there exists a measure \(\q\in \M\) which does not change the jump structure of \(X\).

	\section{Examples}\label{CS}
	
	In this section we discuss two examples.
	Firstly, we investigate a generalization of the Nobel Prize winning model of Black and Scholes \cite{BlackScholes73}, 
	introducing an additional independent stochastic factor, which for instance may be a fractional Brownian motion.
	Secondly, we consider a time-changed L\'evy model as introduced by Carr, Geman, Madan and Yor \cite{CGMY03}. 
	
	\subsection{A Generalized Black-Scholes Model with Independent Factor}\label{A Generalized Black-Scholes Model with Independent Factor}
	Let \(\F, \mathscr{H}\), \(Y\) and \(V \triangleq (I, W)\) be as in
	Example \ref{Independent Integrands} in Appendix \ref{scope}. 
	Here, we denote \(I_t = t\).
	Then Standing Assumption \ref{SA1} holds.
	We assume that \(W\) is a Brownian motion which is \(\p\)-independent of \(Y\).
	Moreover, let \(\gamma \colon \mathbb{D}^m \times [0, T] \to \mathbb{R}\) and \(\sigma \colon \mathbb{D}^m \times [0, T] \to (0, \infty)\) be such that \(\gamma(Y), \sigma(Y)\) are \(\F^Y\)-predictable and 
	\begin{align}\label{well-defined}
	\p\left(\int_0^T |\gamma(Y, s)| \dd s + \int_0^T\sigma(Y, s)^2\dd s < \infty\right) = 1. 
	\end{align}
	We now set
	\begin{align*}
	X \triangleq \int_0^\cdot \gamma(Y, s) \dd s + \sigma(Y) \cdot W.
	\end{align*}
	Standing Assumption~\ref{SA} holds thanks to Corollary \ref{SA Coro} in Appendix \ref{scope}.
	We obtain very mild sufficient and necessary conditions for \(\M\not = \emptyset\).
	\begin{corollary}\label{coro BSISF}
		\(\M\not = \emptyset\) if, and only if, 
		\begin{align}\label{besser als novi}
		\p\left(\int_0^T \bigg(\frac{\gamma(Y, s)}{\sigma(Y, s)}\bigg)^2\dd s< \infty\right ) = 1.
		\end{align}
	\end{corollary}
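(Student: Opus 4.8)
The plan is to identify the characteristics of $X$, observe that they have no jump part, and then read off the claim from Corollary~\ref{concrete coro} and Theorem~\ref{MT2}. Since $X = \int_0^\cdot \gamma(Y, s)\dd s + \sigma(Y)\cdot W$ is continuous, its $(\F, \p)$-characteristics (which by Standing Assumption~\ref{SA} agree with the $(\G, \p)$-ones) are $B^X = \int_0^\cdot \gamma(Y, s)\dd s$, $C^X = \int_0^\cdot \sigma(Y, s)^2\dd s$ and $\nu^X = 0$; in particular $a \equiv 0$, and the local characteristics are $(b^X, c^X, F^X; A^X) = (\gamma(Y, \cdot), \sigma(Y, \cdot)^2, 0; A^X)$ with $A^X_t = t$. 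Throughout I use that $\sigma > 0$, so that $c^X$ never vanishes and $1/\sigma(Y, \cdot)$ is a well-defined $\F$-predictable process; together with \eqref{well-defined} this renders all the expressions below meaningful and $\F$-predictable.

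For the direction ``$\Leftarrow$'', I assume \eqref{besser als novi} and apply Corollary~\ref{concrete coro} with the constant choice $U \equiv 1$, which is trivially of the required measurability and positive. Since $F^X = 0$, the process $\beta$ of \eqref{beta explicit} equals $-\tfrac12 - \gamma(Y, \cdot)/\sigma(Y, \cdot)^2$, and because $\nu^X = 0$ the integrability hypothesis of Corollary~\ref{concrete coro} collapses to $\beta^2 \cdot C^X_T < \infty$ $\p$-a.s., i.e. to $\int_0^T \beta_s^2\, \sigma(Y, s)^2 \dd s < \infty$ $\p$-a.s. Writing $\beta_s^2 \sigma(Y, s)^2 = \big(\tfrac12 \sigma(Y, s) + \gamma(Y, s)/\sigma(Y, s)\big)^2$ and using \eqref{well-defined} together with the triangle inequality in $L^2([0, T], \dd s)$, this integral is $\p$-a.s.\ finite precisely because \eqref{besser als novi} holds. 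Hence Corollary~\ref{concrete coro} yields $\M \not= \emptyset$.

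For the direction ``$\Rightarrow$'', suppose $\M \not= \emptyset$. By Theorem~\ref{MT2} there is a pair $(\beta, U) \in \mathscr{Y}$. As $\nu^X = 0$ we have $a \equiv a' \equiv 0$, so all $\nu^X$-integrals and all sums over jump times in Definition~\ref{def Y}(ii),(iii) vanish; in particular $\widetilde V \equiv 0$ and the market price of risk equation \eqref{MPRE} reduces to $B^X_t + (\beta + \tfrac12)\cdot C^X_t = 0$ for all $t \in [0, T]$, that is, $\int_0^t \big(\gamma(Y, s) + (\beta_s + \tfrac12)\sigma(Y, s)^2\big)\dd s = 0$. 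Since this holds for every $t$ and (by Cauchy--Schwarz, using \eqref{H} and \eqref{well-defined}) the integrand is $\dd s$-integrable on $[0, T]$, it vanishes $\dd s$-a.e., $\p$-a.s., which forces $\beta_s = -\tfrac12 - \gamma(Y, s)/\sigma(Y, s)^2$ for $\dd s \otimes \p$-a.e.\ $(s, \omega)$. Then \eqref{H} reads $H_T = \beta^2 \cdot C^X_T = \int_0^T \beta_s^2 \sigma(Y, s)^2 \dd s < \infty$ $\p$-a.s., and the same $L^2([0, T], \dd s)$-triangle-inequality argument as before (again invoking \eqref{well-defined}) shows this is equivalent to \eqref{besser als novi}.

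The computations are short; the only substantial input is Theorem~\ref{MT2}, whose \emph{almost sure} integrability criterion---rather than a Novikov-type condition---is exactly what makes this clean equivalence possible. The only point requiring a little care is the passage between $\int_0^T \beta_s^2 \sigma(Y, s)^2 \dd s < \infty$ and \eqref{besser als novi}: one implication is immediate from $(p + q)^2 \le 2 p^2 + 2 q^2$, and the other uses that finiteness of the $L^2([0, T], \dd s)$-norms of $\tfrac12 \sigma(Y, \cdot) + \gamma(Y, \cdot)/\sigma(Y, \cdot)$ and of $\tfrac12 \sigma(Y, \cdot)$ forces finiteness of the norm of their difference; both rely on \eqref{well-defined}. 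I expect no obstacle beyond this bookkeeping.
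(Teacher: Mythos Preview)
Your proof is correct and follows essentially the same route as the paper's: apply Corollary~\ref{concrete coro} with $U\equiv 1$ for the ``$\Leftarrow$'' direction, and for ``$\Rightarrow$'' use Theorem~\ref{MT2} to extract $(\beta,U)\in\mathscr{Y}$, read off $\beta$ from the MPRE, and combine $\beta^2\cdot C^X_T<\infty$ with \eqref{well-defined} via the $L^2$ triangle inequality. Your write-up is in fact more explicit than the paper's (e.g.\ the Cauchy--Schwarz justification that the MPRE integrand is $\dd s$-integrable), but the argument is the same.
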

	\begin{proof}
		The implication \(\Longleftarrow\) follows from Corollary \ref{concrete coro} and \eqref{well-defined}. If \(\M\not = \emptyset\), then Theorem \ref{MT2} yields that the MPRE \eqref{MPRE} has a solution \(\beta\) such that \(\p\)-a.s. \(\beta^2 \cdot C^X_T < \infty\).
		We obtain that \(\beta \sigma^2(Y) = - \gamma(Y) - \sigma^2(Y)/2\) up to a \(\p \otimes\dd t\)-null set. Hence, we deduce \eqref{besser als novi} from \eqref{well-defined} together with \(\p\)-a.s. \(\beta^2 \cdot C^X_T < \infty\). 
	\end{proof}
	
	\subsection{CGMY-Model with Independent Stochastic Volatility}
	We pose ourselves in the setting introduced in Example \ref{Time-changed Levy processes} in Appendix \ref{scope}. 
	Let \(Y\) be an Ornstein-Uhlenbeck process driven by a L\'evy subordinator \(L\) with constant initial value \(Y_0 > 0\) 
	and parameter \(\lambda > 0\). More precisely, we assume that \[Y_t \triangleq Y_0 e^{- \lambda t} + e^{- \lambda(t - s)} \cdot L_t,\quad t \in [0, T].\]
	From this definition we immediately deduce that for all \(t \in [0, T]\)
	\begin{align}\label{Y bound}
	Y_t \geq Y_0 e^{- \lambda t} \geq Y_0 e^{- \lambda T} > 0. 
	\end{align}
	Let \(V\) be a one-dimensional L\'evy process with L\'evy-Khinchine triplet \((b^{V}, c^{V}, F^{V})\) and that \(h(x) = x \1_{\{|x| \leq 1\}}\).
	Then, we set 
	\begin{align*}
	X_t \triangleq\mu t + V_{\int_0^t Y_{s-} \dd s},\quad t \in [0, T].
	\end{align*}
	Note that both Standing Assumptions \ref{SA1} and \ref{SA} hold.
	
	\begin{proposition}
		Assume that \(\int_{\mathbb{R}} (1 \wedge |x|)F^V(\dd x) < \infty\).
		Then \(\M \not = \emptyset\) if at least one of the following conditions hold:
		\begin{enumerate}
			\item[\textup{(i)}] \(c^V \not = 0\). 
			\item[\textup{(ii)}]
			\(F^V((- \infty, -1)) > 0\) and \(F^V((1, \infty)) > 0\).
		\end{enumerate}
	\end{proposition}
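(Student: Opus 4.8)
The plan is to produce, in each case, an element of $\mathscr{Y}$ and then invoke Theorem \ref{MT2}. As a first step I would record the characteristics of $X$: since $X_t = \mu t + V_{\tau_t}$ with the absolutely continuous time change $\tau_t = \int_0^t Y_{s-}\dd s$ and $V$ L\'evy with triplet $(b^V,c^V,F^V)$ relative to $h$, the time-change computation of Example \ref{Time-changed Levy processes} shows that $X$ has local $(\F,\p)$- and $(\G,\p)$-characteristics $(b^X,c^X,F^X;A^X)$ with $A^X_t = t$, $b^X_t = \mu + b^V Y_{t-}$, $c^X_t = c^V Y_{t-}$ and $F^X_t(\dd x) = Y_{t-}F^V(\dd x)$; in particular $\nu^X(\{t\}\times\mathbb{R}) = 0$ for all $t$, so $a \equiv 0$ and every sum in Definition \ref{def Y} vanishes. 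Throughout I would use \eqref{Y bound}, i.e.\ $Y_{t-} \ge Y_0 e^{-\lambda T} > 0$ on $[0,T]$, the $\p$-a.s.\ finiteness of $\int_0^T Y_{s-}\dd s$ (the paths of $Y$ being c\`adl\`ag), and the hypothesis $\int_{\mathbb{R}}(1\wedge|x|)F^V(\dd x) < \infty$, which makes $\int h(x)\,F^V(\dd x)$ and $\int_{\{|x|\le1\}}|e^x-1|\,F^V(\dd x)$ finite and thereby lets me split the integrand of the market price of risk equation \eqref{MPRE} into absolutely convergent pieces, working with the genuine drift $b_0^V \triangleq b^V - \int h(x)\,F^V(\dd x)$.

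Under (i) I would apply Corollary \ref{concrete coro} with the deterministic, positive (hence trivially $\mathscr{H}\otimes\mathscr{B}([0,T])\otimes\mathscr{B}$- and $\mathscr{P}(\F)\otimes\mathscr{B}$-measurable) function $U(x) \triangleq e^{-x}\1_{\{x>1\}} + \1_{\{x\le1\}}$, noting that $c^V > 0$ forces $c^X_t = c^V Y_{t-} \ge c^V Y_0 e^{-\lambda T} > 0$, so $c^X \not= 0$. The integrability requirement of Corollary \ref{concrete coro} is then routine: $h(x)(U-1)\equiv 0$ because $U\equiv 1$ on $\{|x|\le1\}$; on $\{x>1\}$ both $(e^x-1)U = 1-e^{-x}$ and $(1-\sqrt{U})^2 = (1-e^{-x/2})^2$ are bounded by $1$, so the corresponding $\star\nu^X_T$-integrals are dominated by $F^V((1,\infty))\int_0^T Y_{s-}\dd s < \infty$ $\p$-a.s.; and the $\beta$ of \eqref{beta explicit} equals $-\mu/(c^V Y_{t-})$ plus a finite constant (the residual integral $\int((e^x-1)U-h)F^X(\dd x)$ being finite, its integrand being $O(x^2)$ near the origin and bounded off $(-1,1)$), whence $\beta_t^2\,c^X_t \le 2\mu^2 e^{\lambda T}/(c^V Y_0) + C\,Y_{t-}$ for a finite constant $C$, which is $\p$-a.s.\ Lebesgue-integrable on $[0,T]$, i.e.\ $\beta^2\cdot C^X_T < \infty$ $\p$-a.s. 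Corollary \ref{concrete coro} then delivers $\M \not= \emptyset$.

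Under (ii), if moreover $c^V \not= 0$ I would be done by case (i), so I may assume $c^V = 0$; then $C^X = 0$, I set $\beta \equiv 0$, and it remains to build a positive, $\F$-predictable and $\mathscr{H}\otimes\mathscr{B}([0,T])\otimes\mathscr{B}$-measurable $U$ solving \eqref{MPRE}, which with $\beta = 0$, $C^X = 0$, $a\equiv 0$ reduces to the pointwise Esscher-type equation $\int_{\mathbb{R}}(e^x-1)U(s,x)F^V(\dd x) = -b_0^V - \mu/Y_{s-}$ for a.e.\ $s$. I would look for $U$ of the form $U(s,x) = \1_{\{|x|\le1\}} + \theta_s e^{-x}\1_{\{x>1\}} + \eta_s\1_{\{x<-1\}}$ with $\theta_s,\eta_s > 0$, so that the left-hand side becomes $B_0 + \theta_s A_+ + \eta_s A_-$ with $B_0 \triangleq \int_{\{|x|\le1\}}(e^x-1)F^V(\dd x)$ finite, $A_+ \triangleq \int_{\{x>1\}}(1-e^{-x})F^V(\dd x) > 0$ and $A_- \triangleq \int_{\{x<-1\}}(e^x-1)F^V(\dd x) < 0$ --- the two strict inequalities being exactly hypothesis (ii). Since $A_+ > 0 > A_-$, the target $r_s \triangleq -b_0^V - \mu/Y_{s-} - B_0$, which is uniformly bounded by \eqref{Y bound}, can be written as $\theta_s A_+ + \eta_s A_-$ through an explicit Borel selection (e.g.\ $(\eta_s,\theta_s) = (1,(r_s-A_-)/A_+)$ when $r_s \ge 0$ and $(\theta_s,\eta_s) = (1,(r_s-A_+)/A_-)$ when $r_s < 0$), which makes $\theta_s,\eta_s$ bounded, bounded away from $0$, and Borel functions of $Y_{s-}$, hence $\F$-predictable and $\mathscr{H}$-measurable. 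For such $U$ the remaining conditions of Definition \ref{def Y} are checked as in case (i): $h(x)(U-1)\equiv 0$; on $\{|x|>1\}$ the quantities $(1-\sqrt{U})^2$, $(e^x-1)U\1_{\{x>1\}}$ are bounded by constants, so $H_T = (1-\sqrt{U})^2\star\nu^X_T < \infty$ and $(e^x-1)U\1_{\{x>1\}}\star\nu^X_T < \infty$ $\p$-a.s.; \eqref{MPRE} holds for every $t$ by construction; and the modified characteristics \eqref{version H mb}, $B = B^X = \mu t + b^V\int_0^t Y_{s-}\dd s$, $C = 0$, $\nu = U\cdot\nu^X$, are $\sigma(Y_{s-}\colon s\le t)$-measurable, so Definition \ref{def Y}(iv) holds (cf.\ Lemma \ref{ML}). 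Hence $(\beta,U)\in\mathscr{Y}$, and Theorem \ref{MT2} gives $\M \not= \emptyset$.

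The hard part is the construction under (ii): one must solve \eqref{MPRE} --- which pins $\int(e^x-1)U\,F^V(\dd x)$ down to a prescribed predictable process --- while keeping $U$ positive with $H_T$, in particular $(1-\sqrt{U})^2\star\nu^X$, and $(e^x-1)U\1_{\{x>1\}}\star\nu^X$ finite, which forces $U$ to decay like $e^{-x}$ at $+\infty$, and while preserving the measurability demanded by Definition \ref{def Y}(iv). Hypotheses (i)/(ii) are precisely what reconciles these demands --- a non-degenerate Gaussian part absorbs the drift through $\beta$ in case (i), and two-sided large jumps let $\int(e^x-1)U\,F^V(\dd x)$ reach an arbitrary bounded value in case (ii) --- and the uniform lower bound \eqref{Y bound} on $Y$ is what keeps that target, hence $U$, bounded.
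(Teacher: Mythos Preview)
Your proof is correct and follows the same strategy as the paper's: in each case you exhibit an explicit pair $(\beta,U)$, verify the conditions of Definition~\ref{def Y} using the lower bound \eqref{Y bound} on $Y$ and the hypothesis $\int(1\wedge|x|)\,F^V(\dd x)<\infty$, and conclude via Corollary~\ref{concrete coro} (case (i)) or Theorem~\ref{MT2} directly (case (ii)). The only difference is the particular $U$ chosen. The paper takes $U(x)=\frac{x}{e^x-1}$ on $\{|x|\le 1\}$ in both cases, which makes $(e^x-1)U=x=h(x)$ there and so eliminates the small-jump contribution to the MPRE integrand outright; you instead take $U\equiv 1$ on $\{|x|\le 1\}$ and absorb the resulting $O(x^2)$ term into $\beta$ (case (i)) or into the constant $B_0$ (case (ii)). On $\{|x|>1\}$ the paper writes down closed-form coefficients split according to the signs of $b^V$ and $\mu$, whereas you parametrise $U$ by $(\theta_s,\eta_s)$ and solve the two-parameter linear equation $\theta_s A_+ + \eta_s A_- = r_s$ by a Borel case selection; both constructions exploit exactly the hypothesis $A_+>0>A_-$ and the boundedness of $r_s$ coming from \eqref{Y bound}. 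Your formulation makes the role of hypothesis (ii) (two-sided large jumps give two degrees of freedom with opposite signs) slightly more transparent, while the paper's explicit formula avoids the case split; neither approach is materially simpler than the other.
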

	
	\begin{proof}
		\textbf{(i).}
		The local characteristics of \(X\) are given as in Lemma \ref{remark TC} in Appendix \ref{scope}.
		We choose 
		\begin{align*}
		U(\omega, t, x) &\triangleq\ \frac{1}{1 - e^x} \1_{\{x < -1\}} + \frac{x}{e^x - 1} \1_{\{|x| \leq 1\} \backslash \{0\}} + \1_{\{0\}}+ \frac{1}{e^x - 1} \1_{\{x > 1\}},
		\\
		\beta_t &\triangleq\ \frac{- \mu}{c^{V} Y_{t-}} - \frac{1}{c^V} \left(b^{V} + \int_{\mathbb{R}} \left(\1_{\{x > 1\}} - \1_{\{x < -1\}}\right) F^{V}(\dd x)\right)- \frac{1}{2}. 
		\end{align*}
		Obviously, \(U\) is positive and \(\mathscr{H}\otimes\mathscr{B}([0,T|)\otimes\mathscr{B}\)- and \(\mathscr{P}(\F)\otimes\mathscr{B}\)-measurable. 
		Taylor's theorem yields the existence of a non-negative constant \(K\) such that 
		\begin{align*}
		|h(x)(U(x) - 1)| + \left(1 - \sqrt{U(x)}\right)^2 +(e^x - 1)U\1_{\{x >1\}}  \leq K (1 \wedge |x|). 
		\end{align*}
		The assumption that \(1 \wedge |x|\) is \(F^V\)-integrable 
		and the bound \eqref{Y bound} yield that the integrability condition of Corollary \ref{concrete coro}, and hence the claim, holds. 
		
		\textbf{(ii).}
		W.l.o.g. we may assume that \(c^{V} = 0\). 
		We set \(\beta \triangleq 0\) and
		\begin{align*}
		U(t, x) \triangleq\ \ &\left(\frac{b^{V} \1_{\{b^{V} \geq 0\}} + Y^{-1}_{t-} \mu \1_{\{\mu \geq 0\}}}{(1 - e^x)F^{V}((-\infty, -1))}+ \frac{F^V((1, \infty))}{1 - e^x}\right)\1_{\{x < -1\}} 
		\\+ &\left(\frac{-b^{V} \1_{\{b^{V} < 0\}} - Y^{-1}_{t-}\mu \1_{\{\mu< 0\}}}{(e^x - 1) F^{V}((1, \infty))} + \frac{F^V((- \infty, -1))}{e^x - 1}\right) \1_{\{x > 1\}}
		\\+ &\ \frac{x}{e^x - 1} \1_{\{|x| \leq 1\}\backslash \{0\}} + \1_{\{0\}}.
		\end{align*}
		Part (i) of Definition \ref{def Y} trivially holds and it is routine to check that (iii) is satisfied.
		Part (ii) follows by Taylor's theorem as above. 
		Since \(U\) is \(\mathscr{H}\otimes\mathscr{B}([0, T])\otimes\mathscr{B}\)-measurable, Lemma \ref{ML} in Appendix \ref{MLemmata A} yields part (iv).
		Hence \((0, U) \in \mathscr{Y}\) and the claim follows from Theorem~\ref{MT2}. 
	\end{proof}
	\section{Proof of Theorem \ref{MT2}}\label{Proofs}
	\subsection{Martingale Property of Exponential \(\mathscr{H}\)-SII Processes}\label{Martingality of exponential SCII processes}
	The following lemma generalizes \cite[Lemma A.1]{KMK10} to arbitrary \(\mathscr{H}\)-SIIs. 
	
	\begin{lemma}\label{eq coro}
		Let \(Y\) be an \((\mathscr{H}, \F, \p)\)-SII with \((\G, \p)\)-character\-istics \((B^Y, C^Y, \nu^Y)\).
		\begin{enumerate}
			\item[\textup{(i)}] 
			The following are equivalent:
			\begin{enumerate}
				\item[\textup{(I)}]
				\(e^Y\) is an \((\G, \p)\)-martingale.
				\item[\textup{(II)}]
				\(e^Y\) is a local \((\G, \p)\)-martingale.
				\item[\textup{(III)}]
				\(e^Y\) is a sigma \((\G, \p)\)-martingale.
				\item[\textup{(IV)}] We have 
				\(
				e^x \1_{\{x > 1\}} \star \nu^Y \in \mathscr{V}(\G, \p)
				\)
				and \(\p\)-a.s.
				\begin{equation}\label{MPR}
				\begin{split}
				B^{Y}+ \frac{1}{2} C^{Y}  &+ \big(e^{x} - 1 - h(x)\big) \star \nu^{Y}
				+ \sum_{s \in [0, \cdot]}\left(\log(1 + \widehat{Y}_s) - \widehat{Y}_s\right) = 0,
				\end{split}
				\end{equation}
				where \(\widehat{Y}_t \triangleq \int_\mathbb{R} (e^{x} - 1) \nu^{Y}(\{t\} \times \dd x)\). 
			\end{enumerate}
			\item[\textup{(ii)}]
			In addition, if \(Y\) is an \((\F, \p)\)-semimartingale and its \((\F, \p)\)-character\-istics coincide with \((B^Y, C^Y, \nu^Y)\), then \textup{(I) \(\Longleftrightarrow\) (II) \(\Longleftrightarrow\) (III) \(\Longleftrightarrow\) (IV)} where \textup{(I) - (IV)} are given as in \textup{(i)} with \(\G\) replaced by \(\F\).
		\end{enumerate}
	\end{lemma}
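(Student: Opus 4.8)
The plan is to prove the circle of implications $\textup{(I)}\Rightarrow\textup{(II)}\Rightarrow\textup{(III)}\Rightarrow\textup{(IV)}\Rightarrow\textup{(I)}$ in part~(i), where all arrows but the last are soft and the last one is where the $\mathscr{H}$-SII structure enters. The implications $\textup{(I)}\Rightarrow\textup{(II)}\Rightarrow\textup{(III)}$ are immediate, since a true martingale is a local martingale and a local martingale is a sigma martingale. For $\textup{(III)}\Rightarrow\textup{(IV)}$ I would argue as for an arbitrary semimartingale: writing $e^Y$ via It\^o's formula and the canonical representation of $Y$ in terms of $(B^Y,C^Y,\nu^Y)$, the sigma martingale property forces the large positive jumps to be integrable enough, i.e. $e^x\1_{\{x>1\}}\star\nu^Y\in\mathscr{V}(\G,\p)$, and the predictable finite-variation part of $e^Y$ to vanish, which after a computation is exactly equation \eqref{MPR} (the last sum compensating the atoms of $\nu^Y$); see \cite{JS}. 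One should be careful here that $e^Y$ need not be a special semimartingale, so the clause $e^x\1_{\{x>1\}}\star\nu^Y\in\mathscr{V}(\G,\p)$ genuinely belongs in $\textup{(IV)}$; note also that, being positive, $e^Y$ is a sigma martingale iff it is a local martingale, so $\textup{(II)}$ and $\textup{(III)}$ are on equal footing.

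The substantial step is $\textup{(IV)}\Rightarrow\textup{(I)}$, which I would obtain by conditioning on $\mathscr{H}$ and invoking the result of Kallsen and Muhle-Karbe \cite{KMK} for semimartingales with independent increments. By Lemma~\ref{lemma kernel} there is a $\p$-null set $N\in\mathscr{F}$ such that for every $\omega\in\complement N$ the process $Y$ is a $(\{\Omega,\emptyset\},\G,\p(\cdot|\mathscr{H})(\omega))$-SII whose $(\G,\p(\cdot|\mathscr{H})(\omega))$-characteristics coincide with $(B^Y,C^Y,\nu^Y)$. The two $\p$-a.s. statements comprising $\textup{(IV)}$ --- that $e^x\1_{\{x>1\}}\star\nu^Y$ has paths of finite variation and that \eqref{MPR} holds --- are events in $\mathscr{F}$, so by Remark~\ref{Remark SA}(ii), after enlarging $N$, we may assume that $\textup{(IV)}$ holds verbatim under $\p(\cdot|\mathscr{H})(\omega)$ for every $\omega\in\complement N$, the characteristics being unchanged. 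Since $Y$ has independent increments under each such kernel, \cite{KMK} upgrades the sigma martingale property to a true one: $e^Y$ is a $(\G,\p(\cdot|\mathscr{H})(\omega))$-martingale for every $\omega\in\complement N$. To integrate this back, fix $0\le s\le t\le T$ and $G\in\mathscr{G}_s$; the kernel-wise martingale property gives $E_{\p(\cdot|\mathscr{H})(\omega)}[e^{Y_t}\1_G]=E_{\p(\cdot|\mathscr{H})(\omega)}[e^{Y_s}\1_G]$ for $\omega\in\complement N$, and Remark~\ref{Remark SA}(i) then yields
\begin{align*}
E_\p[e^{Y_t}\1_G]&=\int E_{\p(\cdot|\mathscr{H})(\omega)}[e^{Y_t}\1_G]\,\p(\dd\omega)\\
&=\int E_{\p(\cdot|\mathscr{H})(\omega)}[e^{Y_s}\1_G]\,\p(\dd\omega)=E_\p[e^{Y_s}\1_G].
\end{align*}
Together with the $\mathscr{G}_s$-measurability and $\p$-integrability of $e^{Y_s}$ (the latter from the kernel-wise martingale property and $Y_0=0$) this is exactly $\textup{(I)}$, which closes the circle and also gives $\textup{(IV)}\Rightarrow\textup{(III)}$.

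For part~(ii) I would close the loop in the smaller filtration. If $Y$ is an $(\F,\p)$-semimartingale whose $(\F,\p)$-characteristics also equal $(B^Y,C^Y,\nu^Y)$, then condition $\textup{(IV)}$ phrased with $\F$ is literally the same statement as condition $\textup{(IV)}$ with $\G$, since it only refers to $(B^Y,C^Y,\nu^Y)$ and to membership in $\mathscr{V}$ (and $\mathscr{V}(\F,\p)=\mathscr{V}(\G,\p)$ for the $\F$-predictable process in question, finite variation being a pathwise notion). Part~(i) gives $\textup{(IV)}\Rightarrow\textup{(I)}_{\G}$; a $(\G,\p)$-martingale that is $\F$-adapted is an $(\F,\p)$-martingale, so $\textup{(I)}_{\G}\Rightarrow\textup{(I)}_{\F}$; and $\textup{(I)}_{\F}\Rightarrow\textup{(II)}_{\F}\Rightarrow\textup{(III)}_{\F}\Rightarrow\textup{(IV)}_{\F}=\textup{(IV)}$ exactly as in the first paragraph applied in the filtration $\F$. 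Hence all four conditions are equivalent.

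The main obstacle I expect is the bookkeeping in the conditioning step: one must verify carefully that the almost-sure path properties making up $\textup{(IV)}$ transfer to $\p$-almost every regular conditional kernel --- this is where Standing Assumption~\ref{SA1} (existence of the regular conditional probability, countable generation of $\mathscr{H}$) and the precise forms of Lemma~\ref{lemma kernel} and Remark~\ref{Remark SA} are indispensable --- and that the kernel-wise martingale identities integrate back against $\p(\dd\omega|\mathscr{H})$ without measurability problems. The semimartingale computation underlying $\textup{(III)}\Leftrightarrow\textup{(IV)}$ is routine but must be carried out without assuming that $e^Y$ is special.
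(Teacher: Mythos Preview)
Your proposal is correct and follows essentially the same route as the paper: the implications among (I)--(IV) other than (IV)$\Rightarrow$(I) are general semimartingale facts (the paper cites \cite{KS(2002b)} for (II)$\Leftrightarrow$(IV)), and the key step (IV)$\Rightarrow$(I) proceeds by conditioning on $\mathscr{H}$ via Lemma~\ref{lemma kernel}, applying \cite{KMK} to the resulting SII, and integrating back. The only cosmetic differences are that the paper first notes $e^Y$ is a non-negative $(\G,\p)$-supermartingale and then reduces to checking $E_\p[e^{Y_T}]=1$ (rather than integrating the full martingale identity as you do), and in (ii) reuses this equality rather than your tower-property observation that a $\G$-martingale adapted to $\F$ is an $\F$-martingale.
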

	
	\begin{proof}
		\textbf{(i).} 
		The implication (I) \(\Longrightarrow\) (II) is trivial and the implication (II) \(\Longrightarrow\) (III) holds due to \cite[Proposition III.6.34]{JS}.
		The implication (III) \(\Longrightarrow\) (II) follows from the fact that non-negative sigma martingales are local martingales, cf. \cite[p.~216]{JS}.
		An exponential semimartingale is a local martingale if, and only if, it is exponentially special and its exponential compensator vanishes, cf. \cite[Lemma 2.15]{KS(2002b)}.
		Hence, the equivalence \textup{(II)} \(\Longleftrightarrow\) \textup{(IV)} follows from \cite[Lemma 2.13, Theorem 2.18, Theorem 2.19]{KS(2002b)}.
		It is left to prove the implication (IV) \(\Longrightarrow\) (I).
		Thanks to the equivalence (III) \(\Longleftrightarrow\) (IV) and \cite[Proposition 3.1]{doi:10.1137/S0040585X980312}, the process
		\(e^Y\) is a non-negative \((\G, \p)\)-supermartingale. Thus, we have to show that \(\E_\p[e^{Y_T}] = 1\).
		Thanks to Remark \ref{Remark SA} (ii), Lemma \ref{lemma kernel} and \cite[Lemma 2.13, Theorem 2.18, Theorem 2.19]{KS(2002b)} \(\p\)-a.s. the process \(Y\) is a \((\{\Omega, \emptyset\}, \G, P(\cdot |\mathscr{H}))\)-SII and \(e^Y\) is \(P\)-a.s. a local \((\G, P(\cdot|\mathscr{H}))\)-martingale.  
		Hence, using \cite[Proposition 3.12]{KMK}, the process \(e^Y\) is \(P\)-a.s. a \((\G, P(\cdot|\mathscr{H}))\)-martingale. This implies that \(P\)-a.s.
		\(
		\int_\Omega e^{Y_T(\omega')} P(\dd \omega' |\mathscr{H}) = 1.
		\)
		Taking \(\p\)-expectation 
		finishes the proof.
		
		\textbf{(ii).} 
		(I) \(\Longrightarrow\) (II) \(\Longleftrightarrow\) (III) \(\Longleftrightarrow\) (IV) follow as in (i).
		Thanks to (i), (IV) implies \(\E_\p[e^{Y_T}] = 1\). Hence, we can also conclude (IV) \(\Longrightarrow\) (I).
	\end{proof}
	
	\subsection{A Candidate Density Process}\label{A Candidate Density Process}
	Standing Assumption \ref{SA} and \cite[Theorem II.2.34]{JS} imply that the continuous local \((\F, \p)\)- and \((\G, \p)\)-martin\-gale parts of \(X\) coincide. We denote them by~\(X^c\).
	\begin{lemma}\label{lemma Z}
		Let \((\beta,U)\in \mathscr{Y}\), then the process \(Z\) as given by \eqref{ZK} 
		is a positive \((\F, \p)\)- and \((\G, \p)\)-martingale.
	\end{lemma}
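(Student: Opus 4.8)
The plan is to verify, in turn, that $Z$ is a well-defined positive local martingale for both $\F$ and $\G$, that $\log Z$ is an $(\mathscr{H},\F,\p)$-SII whose $(\F,\p)$- and $(\G,\p)$-characteristics coincide, and that local martingality then upgrades to true martingality through Lemma~\ref{eq coro}. Only parts (i), (ii) and (iv) of Definition~\ref{def Y} are needed here; the MPRE~\eqref{MPRE} enters the overall argument only later.

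First I would write $N \triangleq \beta \cdot X^c + W \star (\mu^X - \nu^X)$ with $W \triangleq U - 1 + \tfrac{a' - a}{1 - a}\,\1_{\{a<1\}}$, so that $Z = \mathscr{E}(N)$. That $\beta \in L^2_{\loc}(X^c)$ follows from $\p$-a.s.\ $\beta^2 \cdot C^X_T < \infty$ (part of~\eqref{H}) and predictability of the increasing process $\beta^2 \cdot C^X$, which is then locally bounded; that $W \in G_{\loc}(\mu^X)$ follows from the remaining terms of~\eqref{H}, the Hellinger integral $(1 - \sqrt U)^2 \star \nu^X$ controlling the $\mu^X$-part of the jumps and $\sum_s(\sqrt{1-a_s} - \sqrt{1 - a_s'})^2$ the fixed times of discontinuity (compare the estimate following Definition~\ref{def Y} and the criterion in~\cite{JS}). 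Since $X^c$ is the common continuous local $(\F,\p)$- and $(\G,\p)$-martingale part of $X$ and $\nu^X$ is the compensator of $\mu^X$ for both filtrations by Standing Assumption~\ref{SA}, the process $N$, and hence $Z = \mathscr{E}(N)$, is a local $(\F,\p)$- and $(\G,\p)$-martingale. For positivity I would compute $\Delta N_s = W(s,\Delta X_s)\,\1_{\{\Delta X_s \ne 0\}} - \widehat W_s$ with $\widehat W_s \triangleq \int_\er W(s,x)\,\nu^X(\{s\}\times\dd x)$: the correction term is chosen exactly so that $1 + \Delta N_s = U(s,\Delta X_s) > 0$ at a jump time of $X$ (Definition~\ref{def Y}(i)), while $1 + \Delta N_s = \tfrac{1 - a_s'}{1 - a_s} > 0$ at a fixed time of discontinuity with $\Delta X_s = 0$ (using $a, a' \le 1$ from~\eqref{a leq 1} and Definition~\ref{def Y}(i), together with $\{a = 1\} = \{a' = 1\}$). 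Hence $Z = \mathscr{E}(N) > 0$.

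Next I would identify $\log Z$ as an $\mathscr{H}$-SII. As $Z > 0$, the process $L \triangleq \log Z$ is a semimartingale and Itô's formula gives
\[
L = N - \tfrac12\,\beta^2 \cdot C^X + \sum_{s \le \cdot}\big(\log(1 + \Delta N_s) - \Delta N_s\big).
\]
I would then compute the $(\G,\p)$-characteristics $(B^L, C^L, \nu^L)$ of $L$ explicitly: $C^L = \beta^2 \cdot C^X$; $\nu^L$ is the image of $\nu^X$ under $(s,x) \mapsto (s, \log U(s,x))$ off $\{U = 1\}$, together with the atoms at the fixed times of discontinuity produced by $\tfrac{1 - a'}{1 - a}$; and $B^L$ is the ensuing drift, expressible through $B^X$, $\beta \cdot C^X$ and $h(x)(U-1) \star \nu^X$, i.e.\ through the modified drift $B$ of~\eqref{version H mb}, and the sums occurring in~\eqref{H}. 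Since $X$ is an $(\mathscr{H},\F,\p)$-SII, $(B^X, C^X, \nu^X)$ admit $\mathscr{H}$-measurable versions, and by Definition~\ref{def Y}(iv) so do the modified characteristics $(B, C, \nu)$ of~\eqref{version H mb}; invoking the measurability results of Appendix~\ref{MLemmata A} — which guarantee that image measures and integrals against $\mathscr{H}$-measurable transition kernels stay $\mathscr{H}$-measurable — one concludes that $(B^L, C^L, \nu^L)$ admit $\mathscr{H}$-measurable versions, so that $\log Z$ is an $(\mathscr{H},\F,\p)$-SII. Finally, because the $(\F,\p)$- and $(\G,\p)$-characteristics of $X$ coincide and $X^c$ is the common continuous martingale part, $L$ is also an $(\F,\p)$-semimartingale whose $(\F,\p)$- and $(\G,\p)$-characteristics coincide.

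To finish I would apply Lemma~\ref{eq coro} with $Y = \log Z$: the first step shows that $e^{\log Z} = Z$ is a local $(\G,\p)$-martingale, so implication (II)~$\Longrightarrow$~(I) of Lemma~\ref{eq coro}(i) gives that $Z$ is a $(\G,\p)$-martingale; and since $\log Z$ is additionally an $(\F,\p)$-semimartingale with coinciding $(\F,\p)$- and $(\G,\p)$-characteristics and $Z = e^{\log Z}$ is a local $(\F,\p)$-martingale, Lemma~\ref{eq coro}(ii) gives that $Z$ is an $(\F,\p)$-martingale as well (equivalently, once $\E_\p[Z_T] = \E_\p[Z_0] = 1$ is known, the non-negative local $(\F,\p)$-martingale $Z$ is automatically a true $(\F,\p)$-martingale). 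The main obstacle is the middle step: verifying that the explicitly computed characteristics of $\log Z$ really inherit $\mathscr{H}$-measurability from those of $X$ and from~\eqref{version H mb}, for which the technical lemmata of Appendix~\ref{MLemmata A} are indispensable; getting the jump decomposition right — in particular the precise role of the correction $\tfrac{a' - a}{1 - a}\,\1_{\{a<1\}}$, which is exactly what produces the prescribed jumps of $\mathscr{E}(N)$ at fixed times of discontinuity — is the second delicate point.
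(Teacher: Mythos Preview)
Your proposal is correct and follows essentially the same route as the paper: establish that $Z=\mathscr{E}(N)$ is a positive local $(\F,\p)$- and $(\G,\p)$-martingale via the integrability in~\eqref{H} and the jump computation $\Delta N>-1$, then show $\log Z$ is an $(\mathscr{H},\F,\p)$-SII by computing its characteristics and invoking the measurability lemmata of Appendix~\ref{MLemmata A}, and finally upgrade to a true martingale via Lemma~\ref{eq coro}. The only cosmetic differences are that the paper first computes the characteristics of $N$ and then passes to $\log Z$ via \cite[Theorem II.8.10]{JS} rather than using It\^o's formula directly, and that for the $(\F,\p)$-martingale property the paper argues through the supermartingale route (your parenthetical alternative) rather than a second application of Lemma~\ref{eq coro}(ii); the substance is the same.
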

	\begin{proof}
		Let us start by showing that \(Z\) is a positive local \((\F, \p)\)- and \((\G, \p)\)-martin\-gale. 
		We have \(\F \subseteq \G\), i.e. \(\mathscr{F}_t \subseteq \mathscr{G}_t\) for all \(t \in [0, T]\).
		Since \(\beta\) is \(\F\)-predictable, \(\F \subseteq \G\) implies that \(\beta\) is \(\G\)-predictable. 
		Now, \(\p\)-a.s. \(\beta^2 \cdot C^X_T < \infty\) yields that
		\(\beta \cdot X^c\) is a local \((\F, \p)\)- and \((\G, \p)\)-martingale.
		We denote 
		\begin{align*}
		V(t, x) &\triangleq U(t, x) - 1 + \frac{a'_t - a_t}{1 - a_t}\1_{\{a_t < 1\}},\\
		\widehat{V}(t, x) &\triangleq \int_\mathbb{R} V(t, x) \nu^X(\{t\}\times \dd x) 
		= \frac{a'_t- a_t}{1 - a_t} \1_{\{a_t < 1\}},
		\end{align*}
		where we use that \(\{a= 1\} = \{a' = 1\}\). 
		Recalling \(\F\subseteq \G\), we obtain that \(V\) is \(\mathscr{P}(\F)\otimes\mathscr{B}\)- and \(\mathscr{P}(\G)\otimes\mathscr{B}\)-measurable. 
		Moreover, we have
		\begin{equation}\label{jump N}
		\begin{split}
		\widetilde{V}_t &\triangleq V(t, \Delta X_t) \1_{\{\Delta X_t \not = 0\}} - \widehat{V}_t = \begin{cases}
		U(t, \Delta X_t) - 1,&\textup{on } \{\Delta X_t \not = 0\},\\
		-\frac{a'_t - a_t}{1 - a_t} \1_{\{a_t < 1\}},&\textup{on } \{\Delta X_t = 0\}.
		\end{cases}
		\end{split}
		\end{equation}
		Since \(\{U > 0\} = \Omega \times [0, T]\)
		and \(\{a' < 1\} = \{a < 1\}\),
		we have 
		\(\{\widetilde{V}> -1\} = \Omega \times [0, T]\). 
		Now, 
		\cite[Theorem II.1.33 d)]{JS} yields that \(V \star (\mu^X - \nu^X)\) is a local \((\F, \p)\)- and \((\G, \p)\)-martingale
		if \(\p\)-a.s.
		\begin{align*}
		K_T \triangleq \left(1 + \sqrt{1 + V - \widehat{V}}\ \right)^2\star\ & \nu^X_T + \sum_{s \in [0, T]} \big(1 - a_s\big) \left(1 - \sqrt{1 - \widehat{V}^2_s}\ \right)^2 < \infty. 
		\end{align*}
		This holds since \(K_T \leq H_T\) and \(\p\)-a.s. \(H_T < \infty\), cf. \eqref{H} for the definition of \(H_T\).
		Therefore, \(Z\) is a local \((\F, \p)\)- and \((\G, \p)\)-martingale, which is positive due to the fact that \(\{\widetilde{V} > -1\} = \Omega \times [0, T]\) together with \cite[Theorem I.4.61 c)]{JS}. 
		
		Using Lemma~\ref{eq coro}, \(Z\) is a \((\G, P)\)-martingale if \(\log Z\) is an \((\mathscr{H}, \F, P)\)-SII. 
		Since \(Z\) is an \((\F, \p)\)-supermartingale, this also yields that \(Z\) is an \((\F, \p)\)-martingale. 
		We proceed in two steps: Firstly, we compute the \((\G, P)\)-characteristics of \(\log Z\). Secondly, we show that they have \(\mathscr{H}\)-measurable \(P\)-versions.
		We define the local \((\G, \p)\)-martingale
		\(
		N\triangleq \beta \cdot X^{c} + V\star \big(\mu^X - \nu^X\big)
		\)
		and denote its \((\G, \p)\)-characteristics by \((B^N, C^N, \nu^N)\).
		The continuous local \((\G, \p)\)-martin\-gale part of \(N\) is given by \(\beta \cdot X^c\) and hence \( 
		C^N = \beta^2 \cdot C^X.\) 
		Similarly as in \cite{KLS}, it follows that the \((\G, \p)\)-compensator \(\nu^N\) of \(\mu^N\) is given by
		\begin{equation}\label{nuN}
		\begin{split}
		\1_G& \star \nu^{N}
		= \1_G\big(U - 1\big)\star \nu^X + \sum_{t \in [0, \cdot]}\1_{\{a_t> 0\}} \1_G\bigg(- \frac{a'_t - a_t}{1 - a_t}\bigg) (1 - a_t),
		\end{split}
		\end{equation}
		for \(G \in \mathscr{B}, 0 \not \in G\).
		Since \(N\) is a local \((\G, \p)\)-martingale, \cite[Proposition II.2.29]{JS} yields that
		\(
		B^{N}(h') = - (x - h'(x)) \star \nu^{N}.
		\)
		Since identically \(\Delta N = \widetilde{V} > -1\), \cite[Theorem II.8.10]{JS} yields that 
		\(\log Z\) has \((\G, \p)\)-characteristics given by 
		\begin{equation}\label{log Z chara}
		\begin{split}
		&B^{\log Z} = B^{N} - \frac{1}{2} C^{N} + \big(h (\log(1 + x)) - h(x)\big) \star \nu^{N},
		\\
		C^{\log Z} &= C^{N},
		\quad
		\1_A \star \nu^{\log Z} =  \1_A(\log(1 + x))\star \nu^{N},\quad A \in \mathscr{B}, 0 \not \in A.
		\end{split}
		\end{equation}
		Since \(\nu^X\) and \(U \cdot \nu^X\) have \(\mathscr{H}\)-measurable \(\p\)-versions and \(P\)-a.s. \(|h(x)(U - 1)| \star \nu^X_T < \infty\), Lemma \ref{ML} (i) in Appendix \ref{MLemmata A} yields that 
		\(h(x)(U - 1) \star \nu^X\) has an \(\mathscr{H}\)-measurable \(\p\)-version. Hence, since \(B^X\) and 
		\(
		B^X + \beta \cdot C^X + h(x)(U - 1) \star \nu^X
		\)
		have \(\mathscr{H}\)-measurable \(\p\)-versions, so does \(\beta \cdot C^X\). 
		Now Lemma \ref{ML2} in Appendix \ref{MLemmata A} implies that \(C^N\) has an \(\mathscr{H}\)-measurable \(\p\)-version.
		Recalling \eqref{nuN}, Lemma \ref{ML} (i) and (ii) yield that \(\nu^N\) has an \(\mathscr{H}\)-measurable \(\p\)-version.
		For all \(g \in \mathscr{I}\) we have 
		\(
		|g(\log(1 + x))| 
		\leq 3 (1 \wedge |x|^2)
		\).
		Moreover, \(P\)-a.s. \((|x - h'(x)| + |h(\log(1 + x)) - h (x)|) \star \nu^N_T < \infty\) follows from \(P\)-a.s. \((|x| \wedge |x|^2) \star \nu^N_T < \infty\), cf. \cite[Proposition II.2.29]{JS}.
		Therefore, since \(\nu^N\) has an \(\mathscr{H}\)-measurable \(\p\)-version, 
		Lemma \ref{ML} (i) implies that \(B^{\log Z}(h)\) and \(\nu^{\log Z}\) also have \(\mathscr{H}\)-measurable \(\p\)-versions. This concludes the proof.
	\end{proof}
	\begin{remark}
		The statement of Lemma \ref{lemma Z} stays true if the pair \((\beta, U)\) only satisfies (i), (ii) and (iv) in Definition \ref{def Y}. 
	\end{remark}

	\subsection{Proof of Theorem \ref{MT2}}\label{proof1}
	Let \((\beta, U) \in \mathscr{Y}\) and \(Z\) as in \eqref{ZK}. 
	Thanks to Lemma \ref{lemma Z}, \(Z\) is a positive \((\F, \p)\)- and \((\G, \p)\)-martin\-gale.
	Define \(Q\) by \(\q (A) = E^P[Z_T\1_A]\) for \(A \in \mathscr{F}\). Since \(P\)-a.s. \(Z_T > 0\), it holds that \(Q\sim P\).
	Thanks to the martingale property of \(Z\), \(Q(A) = E^P[Z_t \1_A]\) for \(A \in \mathscr{G}_t\) and \(t \in [0, T]\).
	From \eqref{jump N} it follows that \(P\)-a.s.
	\(
	\langle Z^c, X^c\rangle^{\p, \K} = Z_-\beta \cdot C^X\) and
	\(M^{\p}_{\mu^{X}}(Z |\mathscr{P}(\K) \otimes \mathscr{B})  = Z_{-} U\ \textup{ for }\ \K \in \{\F, \G\}.\)
	Now, using Girsanov's theorem \cite[Theorem III.3.24]{JS}, \(X\) is an \((\F, \q)\)- and \((\G, \q)\)-semimartingale with \((\F, \q)\)- and \((\G, \q)\)-characteristics given by \eqref{version H mb}. Since \((\beta, U) \in \mathscr{Y}\) and \(Q \sim P\), these characteristics have an \(\mathscr{H}\)-measurable \(\q\)-version, i.e.
	\(X\) is an \((\mathscr{H}, \F, \q)\)-SII. Moreover, since the MPRE \eqref{MPRE} holds, \(S\) is 
	an \((\F, \q)\)-martingale by Lemma \ref{eq coro}. 
	Therefore, we have shown that (i) holds and \(\mathscr{Y}\not = \emptyset \Longrightarrow \M \not = \emptyset\).

	Next, we prove (ii) and \(\M \not = \emptyset \Longrightarrow \mathscr{Y} \not = \emptyset\).
	Take \(\q\in \M\) and denote the \(\F\)-density process of \(\q\) w.r.t. \(\p\) by \(Z^*\).
	For all \(t \in [0, T]\) we have \(\q(Z^*_t = 0) = \E_\p[Z^*_t \1_{\{Z^*_t = 0\}}] = 0\).
	In view of \cite[Proposition I.2.4, Theorem III.3.4]{JS}, we also have \(\q(Z^*_{t-} = 0) = \E^\p[Z^*_{t-}\1_{\{Z^*_{t-} = 0\}}] = 0\). Hence, \(Q \sim P\) and \cite[Lemma III.3.6]{JS} yields 
	\(\p\)-a.s. \(Z^*_t > 0\) and \(Z^*_{t-} > 0\) for all \(t \in [0, T]\). 
	Denote \(\Lambda \triangleq \{M^\p_{\mu^X}(Z^*|\mathscr{P}(\F)\otimes\mathscr{B}) > 0\} \cap \{Z^*_- > 0\} \times \mathbb{R}\)
	and
	\begin{align*}
	U^*(&\omega, t, x)\triangleq \begin{cases}
	\frac{1}{Z^*_{t-}(\omega)}M^\p_{\mu^X}(Z^*|\mathscr{P}(\F)\otimes\mathscr{B})(\omega, t, x),&\hspace{-0.25cm}\textup{on } \Lambda, \\
	1,&\hspace{-0.25cm}\textup{otherwise.} 
	\end{cases}
	\end{align*}
	Girsanov's theorem \cite[Theorem III.3.24]{JS} yields the existence of an \(\F\)-predictable process \(\beta\) such that 
	the \((\F, \q)\)-characteristics of \(X\) are given by \eqref{version H mb} with \(U\) replaced by \(U^*\). 
	Moreover, since \(X\) is an \((\mathscr{H}, \F, \q)\)-SII and its \((\F, \q)\)-characteristics coincide with its \((\G, \q)\)-character\-istics, 
	there exists an \(\mathscr{H}\)-measurable \(\q\)-version of these characteristics. 
	Since \(\p \sim \q\), there also exists an \(\mathscr{H}\)-measurable \(\p\)-version.
	Using again Girsanov's theorem and \(Q \sim P\), we obtain that \(\p\)-a.s. \(|h(x)(U^* - 1)| \star \nu^X_T < \infty\).
	Since \(e^X\) is an \((\F, \q)\)-martingale and \(Q \sim P\), \cite[Lemma 2.13, Theorem 2.19]{KS(2002b)} imply that \(\p\)-a.s. \(
	(e^x - 1)\1_{\{x > 1\}}U^* \star \nu^X_T
	< \infty
	\) and that the MPRE \eqref{MPRE} holds \(\p\)-a.s. for all \(t \in [0, T]\) with \(U\) replayed by \(U^*\).
	Denote by \(H^*\) the process \(H\) with \(U\) replaced by \(U^*\). Since \(\q \sim \p\), we deduce from \cite[Theorem \(1^{**}\)]{KLS-LACOM1}, that 
	\(\p\)-a.s. \(H^*_T < \infty\). 
	We now show that there exists a \(\mathscr{P}(\F)\otimes \mathscr{B}\)-measurable function \(U\) and a \(\p\)-evanescence set \(\Lambda'\) such that \(U = U^*\) on \(\complement \Lambda' \times \mathbb{R}\), \(\{U > 0\} = \{a' \leq 1\} = \Omega \times [0, T]\) and \(\{a = 1\} = \{a' = 1\}\).
	The properties of \(U^*\) then readily extend to \(U\) and \((\beta, U) \in \mathscr{Y}\) follows. 
	Denote \(a^*_t \triangleq (U^* \cdot \nu^X)(\{t\} \times \mathbb{R})\).
	Due to the fact that subsets of thin sets are itself thin, cf. \cite[Theorem 3.19]{HWY}, the set \(\{a = 1\} \subseteq \{a > 0\}\) is thin. Hence, \cite[Lemma I.2.23]{JS} yields the existence of a sequence of \(\F\)-predictable times \((\tau_n)_{n \in \mathbb{N}}\) such that \(\{a = 1\} = \bigcup_{n \in \mathbb{N}} \of \tau_n\gs\) up to \(\p\)-evanescence.
	Using \cite[Proposition II.1.17]{JS} similarly as in the proof of \cite[Theorem III.3.17]{JS}, we obtain that \(\q(a^*_{\tau_n} = 1, \tau_n < \infty) = 1\).
	By the equivalence \(Q \sim P\) it also holds that \(\p(a^*_{\tau_n} = 1, \tau_n < \infty) = 1\). Hence, \(\{a = 1\} \subseteq \{a^* = 1\}\) up to \(\p\)-evanescence.
	For the converse direction we slightly modify the argument.
	Since also \(\{a^* = 1\} \subseteq \{a > 0\}\), there exists a sequence of \(\F\)-predictable times \((\rho_n)_{n \in \mathbb{N}}\) such that \(\{a^* = 1\} = \bigcup_{n \in \mathbb{N}} \of \rho_n\gs\) up to \(\p\)-evanescence.
	Set \(D \triangleq \{\Delta X \not = 0\}\).
	Now \cite[Proposition II.1.17]{JS} yields that \(\q(\rho_n \in D |\mathscr{F}_{\rho_n-}) = a^*_{\rho_n}\) on \(\{\rho_n < \infty\}\) for each \(n \in \mathbb{N}\).
	Hence, we deduce from \cite[Theorem III.3.4]{JS} that
	\(
	\q( \rho_n \not \in D,\rho_n < \infty) 
	= \E_\p[Z_{\rho_n} (1 - a^*_{\rho_n}) \1_{\{\rho_n < \infty\}}] 
	= 0,
	\)
	which implies \(\p(\rho_n \not \in D, \rho_n < \infty) = 0\) since \(Q \sim P\). Using \cite[Proposition II.1.17]{JS} yields that \(\p(a_{\rho_n} = 1, \rho_n < \infty) = 1\) for each \(n \in \mathbb{N}\). This proves that \(\{a^* = 1\} \subseteq \{a = 1\}\) up to \(\p\)-evanescence.
	It follows as in the proof of \cite[Lemma 3.3.1]{liptser1989theory} that  \(\{a^* > 1\}\) is a \(Q\)-evanescence set. Again, since \(Q \sim P\), \(\{a^* > 1\}\) is also a \(P\)-evanescence set.
	Define \(\Lambda' \triangleq \{(\omega, t) \in \Omega \times [0, T]\colon (a_t(\omega) = 1\textup{ and } a^*_t (\omega)\not = 1)\textup{ or }(a_t(\omega) \not = 1\textup{ and } a^*_t (\omega) = 1) \textup{ or } a^*_t(\omega) > 1\}\) and
	\begin{align*}
	U(\omega, t, x) \triangleq \begin{cases}
	1,& \textup{on } \Lambda' \times \mathbb{R},
	\\
	U^*(\omega, t, x),&\textup{otherwise},
	\end{cases}
	\end{align*}
	which is a \(\mathscr{P}(\F)\otimes\mathscr{B}\)-measurable function.
	Recalling \eqref{a leq 1}, we obtain that \(\{U > 0\} = \{a' \leq 1\} = \Omega \times [0,T]\) and that \(\{a = 1\} = \{a' = 1\}\). 
	This finishes the proof.
	\hfill\(\square\)

	\section*{Acknowledgments}
	The author thanks the referees and the associate editor for their time and effort devoted to the evaluation of the manuscript and for their very useful remarks. 
	\appendix\normalsize
	\section{Measurability Lemmata}\label{MLemmata A}
	In this Appendix we collect some measurability results which are used in the proof of Lemma~\ref{lemma Z}.
	We start with an elementary observation.
	\begin{lemma}\label{equi mb}
		A non-negative random variable \(Y\) has an \(\mathscr{H}\)-measurable \(\p\)-version if, and only if, there exists a \(\p\)-null set \(N\in \mathscr{F}\) such that for all \(\omega \in \complement N\) we have \(P(Y = Y(\omega)|\mathscr{H})(\omega)=1\).
	\end{lemma}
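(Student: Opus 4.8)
The plan is to derive both implications directly from the defining properties (i)--(iv) of the regular conditional probability \(P(\cdot|\mathscr{H})(\cdot)\) together with the two observations in Remark~\ref{Remark SA}; the only genuinely analytic input is the elementary fact that a probability measure carried by a level set \(\{Y = c\}\) integrates \(Y\) to the value \(c\).

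For the ``\(\Leftarrow\)'' direction I would, given the \(\p\)-null set \(N\), take as candidate version \(Y'(\omega) \triangleq \int Y(\omega')\, P(\dd\omega'|\mathscr{H})(\omega) \in [0,\infty]\). A routine approximation by simple functions, using property (ii) and monotone convergence, shows that \(Y'\) is \(\mathscr{H}\)-measurable, and Remark~\ref{Remark SA}(i) identifies it as a \(\p\)-version of \(\E[Y|\mathscr{H}]\). For \(\omega \in \complement N\) the hypothesis says exactly that \(P(\cdot|\mathscr{H})(\omega)\) is carried by \(\{Y = Y(\omega)\}\), so integrating gives \(Y'(\omega) = Y(\omega)\); since \(N\) is \(\p\)-null, \(Y'\) is then an \(\mathscr{H}\)-measurable \(\p\)-version of \(Y\). (If a real-valued version is insisted upon, replace \(Y'\) by \(Y'\1_{\{Y' < \infty\}}\), an \(\mathscr{H}\)-measurable modification on the \(\mathscr{H}\)-measurable \(\p\)-null set \(\{Y' = \infty\}\).)

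For the ``\(\Rightarrow\)'' direction I would start from an \(\mathscr{H}\)-measurable \(\p\)-version \(Y'\) of \(Y\). Remark~\ref{Remark SA}(ii) applied to the \(\p\)-a.s.\ event \(\{Y = Y'\} \in \mathscr{F}\) yields a \(\p\)-null set \(N_1 \in \mathscr{H}\) with \(P(Y = Y'|\mathscr{H})(\cdot) = 1\) on \(\complement N_1\); property (iv) yields a \(\p\)-null set \(N_2 \in \mathscr{H}\) on whose complement \(P(G|\mathscr{H})(\omega) = \1_G(\omega)\) for every \(G \in \mathscr{H}\), and applying this to \(G = \{Y' = Y'(\omega)\} \in \mathscr{H}\) (which contains \(\omega\)) gives \(P(Y' = Y'(\omega)|\mathscr{H})(\omega) = 1\) for \(\omega \in \complement N_2\). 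Setting \(N \triangleq N_1 \cup N_2 \cup \{Y \ne Y'\} \in \mathscr{F}\), which is \(\p\)-null, and intersecting the two events of full \(P(\cdot|\mathscr{H})(\omega)\)-measure, one finds for \(\omega \in \complement N\) that \(Y(\omega) = Y'(\omega)\) and \(P(Y = Y'(\omega)|\mathscr{H})(\omega) = 1\), i.e.\ \(P(Y = Y(\omega)|\mathscr{H})(\omega) = 1\).

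The step I expect to be the only real subtlety is the ``freezing'' of \(Y'\) at its value \(Y'(\omega)\) in the second direction: it is there that property (iv), hence the Standing Assumption that \(\mathscr{H}\) be countably generated, is indispensable. Everything else is routine bookkeeping with \(\p\)-null sets.
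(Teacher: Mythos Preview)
Your proposal is correct and follows essentially the same route as the paper: in both directions the candidate \(\mathscr{H}\)-measurable version is \(\int Y(\omega')\,P(\dd\omega'|\mathscr{H})\) (i.e.\ \(\E[Y|\mathscr{H}]\)), and the converse combines Remark~\ref{Remark SA}(ii) applied to \(\{Y = Y'\}\) with property~(iv) applied to the \(\mathscr{H}\)-set \(\{Y' = Y'(\omega)\}\). The paper compresses your three null sets \(N_1, N_2, \{Y \neq Y'\}\) into a single line, writing \(P(Y = Y(\omega)|\mathscr{H})(\omega) = P(K = K(\omega)|\mathscr{H})(\omega) = \1_{\{K(\omega) = K(\omega)\}} = 1\), but the underlying reasoning is identical to yours.
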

	\begin{proof}
		Firstly, we show the implication \(\Longleftarrow\). 
		Thanks to Remark \ref{Remark SA} (i), for all \(A \in \mathscr{F}\) we have \(\E[\1_A \E[Y|\mathscr{H}]] = \E[\1_A\int Y(\omega')P(\dd \omega'|\mathscr{H})] = \E[\1_A Y].\)
		Hence, \(\E[Y|\mathscr{H}]\) is an \(\mathscr{H}\)-measurable \(\p\)-version of \(Y\).
		
		Secondly, assume that \(Y\) has an \(\mathscr{H}\)-measurable \(\p\)-version \(K\). 
		In view of Remark \ref{Remark SA} (ii) and \eqref{varad}, there exists a \(\p\)-null set \(N \in \mathscr{F}\) such that for all \(\omega \in \complement N\) it holds that 
		\(P(Y = Y(\omega)|\mathscr{H})(\omega) = P(K  = K(\omega)|\mathscr{H}) (\omega) = \1_{\{K(\omega)= K(\omega)\}} = 1\). 
	\end{proof}
	Next, we study measurability of integrals w.r.t. random measures. 
	\begin{lemma}\label{ML}
		Assume that \(\nu\) is a \((\G, \p)\)-compensator of a random measure of jumps with an \(\mathscr{H}\)-measurable \(\p\)-version. Let \(U\colon \Omega \times [0, T] \times \mathbb{R} \to \mathbb{R}^+\) be \(\mathscr{P}(\G)\otimes \mathscr{B}\)-measurable such that \(P\)-a.s. \((1 \wedge |x|^2) U \star \nu_T < \infty\) and \(U \cdot \nu\) has an \(\mathscr{H}\)-measurable \(P\)-version.
		\begin{enumerate}
			\item[\textup{(i)}]
			Let \(g \colon \mathbb{R} \times \mathbb{R} \to\mathbb{R}\) be a Borel function such that \(P\)-a.s. \(|g(x, U)| \star \nu_T < \infty\), then \(g(x, U)\star \nu\) has an \(\mathscr{H}\)-measurable \(P\)-version.
			\item[\textup{(ii)}]
			Let \(f \colon \mathbb{R} \times \mathbb{R} \to \mathbb{R}\) be a Borel function such that \(f(0, y) = 0\) for all \(y \in \mathbb{R}\)
			and denote \(a_t \triangleq \nu(\{t\}\times \mathbb{R})\) and \(a'_t \triangleq \int_{\mathbb{R}}U(t, x) \nu (\{t\}\times \dd x)\). Suppose that \(a'_t \leq 1\) for all \(t\in [0, T]\) and \(P\)-a.s. \(\sum_{s \in [0, T]} |f(a_s, a'_s)| < \infty\). The process
			\(
			\sum_{s \in [0, \cdot]} f(a_s, a'_s)
			\)
			has an \(\mathscr{H}\)-measurable \(\p\)-version.
			
		\end{enumerate}
	\end{lemma}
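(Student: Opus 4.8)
The plan is to prove both parts at once by showing that the random variables in (i) and (ii) are, conditionally on $\mathscr{H}$, almost surely constant and equal to their realised values, and then invoking Lemma \ref{equi mb}. The crucial preliminary step is to upgrade the defining data of an ``$\mathscr{H}$-measurable version'' of a random measure — which is only a family of $\mathscr{H}$-measurable real random variables, one for each test function — to honest $\mathscr{H}$-measurability of the measures $\nu$ and $U\cdot\nu$ themselves. To this end I would first pick, using separability of the Polish state space and Standing Assumption \ref{SA1}, a countable family $\{g_k\}\subseteq\mathscr{I}$ and the rationals $q\in[0,T]$ such that the numbers $\{\nu([0,q]\times g_k)\}_{k,q}$ determine the ($\p$-a.s.\ finite) measure $(1\wedge|x|^2)\cdot\nu$ on $[0,T]\times\mathbb{R}$. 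Since $1\wedge|x|^2$ is strictly positive off the origin and $\nu$ does not charge $\{0\}$, dividing out the weight $1\wedge|x|^2$ recovers $\nu$ on $[0,T]\times\mathbb{R}$ itself, measurably; the same works for $U\cdot\nu$, whose $(1\wedge|x|^2)$-weighting is finite by hypothesis. Because $\nu$ and $U\cdot\nu$ have $\mathscr{H}$-measurable $\p$-versions, each of these countably many random variables agrees $\p$-a.s.\ with an $\mathscr{H}$-measurable one; combining the countably many exceptional $\p$-null sets with Remark \ref{Remark SA}(ii) and \eqref{varad}, I obtain a $\p$-null set $N\in\mathscr{F}$ so that for all $\omega\in\complement N$ the measures $\nu$ and $U\cdot\nu$ are, under $P(\cdot|\mathscr{H})(\omega)$, almost surely equal to the fixed deterministic measures obtained by evaluating the chosen $\mathscr{H}$-measurable versions at $\omega$, which are precisely their realisations at $\omega$.

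Granting this, (i) follows thus. By standard measurable Radon--Nikodym theorems on Polish spaces, the density $\widehat U:=\dd(U\cdot\nu)/\dd\nu$ can be chosen as a jointly measurable functional of the pair $(\nu,U\cdot\nu)$, and $U=\widehat U$ holds $\nu$-a.e., $\p$-a.s.; hence $g(x,U)\star\nu_t=g(x,\widehat U)\star\nu_t$ $\p$-a.s., and the right-hand side is a fixed measurable functional $\Phi_t(\nu,U\cdot\nu)$. By the first paragraph, for $\p$-a.e.\ $\omega$ the random variable $\Phi_t(\nu,U\cdot\nu)$ is $P(\cdot|\mathscr{H})(\omega)$-a.s.\ equal to $\Phi_t$ evaluated at the realisations of $\nu$ and $U\cdot\nu$ at $\omega$, i.e.\ to its value at $\omega$ — the $\p$-a.s.\ finiteness $|g(x,U)|\star\nu_T<\infty$ being transferred to $P(\cdot|\mathscr{H})(\omega)$ via Remark \ref{Remark SA}(ii). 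Lemma \ref{equi mb} then yields an $\mathscr{H}$-measurable $\p$-version of $g(x,U)\star\nu_t$, and since $t\in[0,T]$ was arbitrary, (i) follows.

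For (ii), note that $a_s=\nu(\{s\}\times\mathbb{R})$ and, using $a'_s\leq 1$, $a'_s=(U\cdot\nu)(\{s\}\times\mathbb{R})$ are measurable functionals of $\nu$ and $U\cdot\nu$. Only countably many $s$ can be atoms of $\nu$ in the time coordinate — the time-marginal of the finite measure $(1\wedge|x|^2)\cdot\nu$ has at most countably many atoms — and these, together with the corresponding $\nu$- and $(U\cdot\nu)$-masses, can be enumerated measurably from the two measures; since $f(0,\cdot)=0$ only such atoms contribute to the sum, and by hypothesis $\sum_{s\in[0,T]}|f(a_s,a'_s)|<\infty$ $\p$-a.s., so $\sum_{s\in[0,t]}f(a_s,a'_s)$ is again a $\p$-a.s.\ well-defined measurable functional of $(\nu,U\cdot\nu)$, and the argument of the previous paragraph applies verbatim. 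The main obstacle throughout is this first step: one must genuinely reconstruct the random measures — not just their individual integrals — in an $\mathscr{H}$-measurable way, so that the derived functionals (Radon--Nikodym densities, atom masses, the resulting integrals and sums) inherit $\mathscr{H}$-measurability. The passage through the finite measure $(1\wedge|x|^2)\cdot\nu$ and the uniform ($\p$-a.s.) reduction to $P(\cdot|\mathscr{H})$-constancy are the delicate points, while the Radon--Nikodym and atom-extraction facts are standard.
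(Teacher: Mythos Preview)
Your proposal is correct and follows the same overall strategy as the paper: establish that the relevant random variables are $P(\cdot|\mathscr{H})(\omega)$-a.s.\ equal to their values at $\omega$, then invoke Lemma~\ref{equi mb}. Both proofs begin by using a countable determining family --- you via $\{g_k\}\subseteq\mathscr{I}$ and rational times, the paper via rational rectangles $(r,s]\times(c,d]$ intersected with $\complement B_n=\{|x|\geq 1/n\}$ followed by a monotone class argument and $n\to\infty$ --- to upgrade the hypothesis to conditional constancy of the \emph{full} random measures $\nu$ and $U\cdot\nu$. The only substantive difference is how $U$ is then recovered. You invoke a measurable Radon--Nikodym theorem to exhibit $\widehat{U}=d(U\cdot\nu)/d\nu$ as a functional of the pair $(\nu,U\cdot\nu)$; the paper proceeds more elementarily, noting that once $\nu(\omega^*)=\nu(\omega)$ and $(U\cdot\nu)(\omega^*)=(U\cdot\nu)(\omega)$ one has $\int\1_G\,U(\omega^*)\,\nu(\omega,\cdot)=\int\1_G\,U(\omega)\,\nu(\omega,\cdot)$ for all $G$, so $U(\omega^*)=U(\omega)$ holds $\nu(\omega)$-a.e.\ directly, and hence $(g(x,U)\star\nu_t)(\omega^*)=(g(x,U)\star\nu_t)(\omega)$. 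Likewise for (ii) the paper bypasses your measurable atom-enumeration by simply reading off $a_t(\omega^*)=a_t(\omega)$ and $a'_t(\omega^*)=a'_t(\omega)$ for every $t$, whence the sums coincide. Your formulation is conceptually cleaner; the paper's avoids the nontrivial black box of a jointly measurable Radon--Nikodym selection and is entirely self-contained.
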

	\begin{proof}
		Denote \(B_n \triangleq \{x \in \mathbb{R} \colon |x| < 1/n\}\)
		and take  \(0 \leq r \leq s \leq T\) and \(G \in \mathscr{B}\).
		There exists a constant \(K\) such that \(\1_{\complement B_n} (x) \leq K(1 \wedge |x|^2)\). 
		Thus, since \(\nu\) and \(U \cdot \nu\) have \(\mathscr{H}\)-measurable \(P\)-versions, the random variables \(\nu((r, s] \times G \cap \complement B_n)\) and \((U \cdot \nu)((r, s] \times G \cap \complement B_n)\) have also \(\mathscr{H}\)-measurable \(\p\)-versions.
		By Remark \ref{Remark SA} (ii) and Lemma \ref{equi mb}, there exists a \(\p\)-null set \(N\in\mathscr{F}\) such that for all \(\omega \in \complement N\) there is a \(P(\cdot|\mathscr{H})(\omega)\)-null set \(N_\omega\in \mathscr{F}\) such that for all \(\omega^* \in \complement N_\omega\) we have 
		\begin{align*}
		(\1_{\complement B_n}U \star \nu_T)(\omega) &+ (\1_{\complement B_n} U\star \nu_T)(\omega^*) + (|g(x, U)| \star \nu_T)(\omega) + (|g(x, U)| \star \nu_T)(\omega^*) \\&+ \sum_{s \in [0, T]} |f(a_s(\omega), a'_s(\omega))| + \sum_{s \in [0, T]} |f(a_s(\omega^*), a'_s(\omega^*))| < \infty
		\end{align*} and 
		\begin{equation}\label{eq: lemma app}
		\begin{split}
		(\1_G \1_{\complement B_n} \star \nu_T)(\omega^*)&= \int_0^T \int_\mathbb{R} \1_G(\omega^*, s, x) \1_{\complement B_n}(x) \nu(\omega, \dd s \times \dd x),\\
		(\1_G \1_{\complement B_n} \star (U\cdot \nu)_T)(\omega^*)&= \int_0^T \int_\mathbb{R} \1_G(\omega^*, s, x) \1_{\complement B_n}(x) U(\omega, s, x) \nu(\omega, \dd s \times \dd x)
		\end{split}
		\end{equation}
		for all \(n \in \mathbb{N}, G = \Omega \times [0, T] \times \mathbb{R}\) and \(G = A\times (r, s] \times (c, d]\) with \(A \in \mathscr{F}, r, s, c, d \in \mathbb{Q}, 0 \leq r \leq s \leq T\) and \(c \leq d\).
		By a monotone class argument, \eqref{eq: lemma app} holds for all \(n \in \mathbb{N}\) and \(G \in \mathscr{F} \otimes \mathscr{B}([0, T]) \otimes \mathscr{B}.\) Letting \(n \to \infty\) and using the monotone convergence theorem yields that 
		\begin{align*}
		(\1_G \star \nu_T)(\omega^*)&= \int_0^T \int_\mathbb{R} \1_G(\omega^*, s, x) \nu(\omega, \dd s \times \dd x),\\
		(\1_G \star (U\cdot \nu)_T)(\omega^*)&= \int_0^T \int_\mathbb{R} \1_G(\omega^*, s, x) U(\omega, s, x) \nu(\omega, \dd s \times \dd x)
		\end{align*}
		for all \(G \in \mathscr{F} \otimes \mathscr{B}([0, T]) \otimes \mathscr{B}.\)
		
		Therefore, for all \(t \in [0, T]\) 
		we have \(a_t(\omega^*) = a_t(\omega)\) and \(a'_t(\omega^*) = a'_t(\omega)\), which implies
		\begin{align*}
		\sum_{s \in [0, t]} f(a_s(\omega^*), a'_s(\omega^*)) = \sum_{s \in [0, t]} f(a_s(\omega), a'_s(\omega)).
		\end{align*}
		By Lemma \ref{equi mb}, this proves the claim of (ii).
		
		Since each non-negative \(\mathscr{F}\otimes \mathscr{B}([0, T])\otimes\mathscr{B}\)-measurable function can be approximated from below by simple non-negative \(\mathscr{F}\otimes\mathscr{B}([0, T])\otimes\mathscr{B}\)-measurable functions, we have 
		\begin{align*}
		(g(x, U)\star \nu_t)(\omega^*) = \int_0^t \int_{\mathbb{R}} g(x, U(\omega^*, s, x)) \nu(\omega, \dd s \times \dd x)
		\end{align*}
		and
		\begin{align*}
		\int_0^T \int_{\mathbb{R}} \1_{G}&(s, x)  U(\omega^*, s, x) \nu(\omega, \dd s \times \dd x) 
		=  \int_0^T \int_\mathbb{R} \1_{G}(s, x) U(\omega, s, x)\nu(\omega, \dd s \times \dd x)
		\end{align*}
		for all \(t \in [0, T]\) and \(G \in \mathscr{B}([0, T])\otimes \mathscr{B}\).    
		Thus,
		\(\nu(\omega, \dd s \times \dd x)\)-a.e. \(U(\omega^*, \cdot, \cdot) = U(\omega, \cdot, \cdot)\). We conclude that for all \(t \in [0, T]\)
		\begin{align*}
		(g(x, U) \star \nu_t)(\omega^*) &= \int_0^t \int_\mathbb{R} g(x, U(\omega^*, s, x)) \nu(\omega, \dd s \times \dd x) = (g(x, U) \star \nu_t)(\omega). 
		\end{align*}
		Now, (i) follows again from Lemma \ref{equi mb}.
	\end{proof}
	The same arguments as in the proof of Lemma \ref{ML} yield the following 
	\begin{lemma}\label{ML2}
		Let \(k \colon \mathbb{R} \to \mathbb{R}^+\) be a Borel function, \(\gamma \colon \Omega \times [0, T]\to \mathbb{R}\) be \(\mathscr{F} \otimes \mathscr{B}([0, T])\)-measurable, \(C \in \mathscr{V}^+\) and assume that \(P\)-a.s. \(|\gamma| \cdot C_T < \infty\). If \(C\) and \(\gamma \cdot C\) have \(\mathscr{H}\)-measurable \(\p\)-versions, then so does \(k(\gamma) \cdot C\).
	\end{lemma}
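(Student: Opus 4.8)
The statement is of the same nature as Lemma~\ref{ML}, and the plan is to re-run the regular-conditional-probability argument used there, the only genuine novelty being that $\gamma$ is not assumed to be non-negative. First I would fix a countable dense subset $D\subseteq[0,T]$ with $0,T\in D$. For each $q\in D$ the hypothesis furnishes $\mathscr{H}$-measurable $\p$-versions of $C_q$ and of $(\gamma\cdot C)_q$; if $K$ is such a version of $(\gamma\cdot C)_q$, then $K^{+}$ and $K^{-}$ are $\mathscr{H}$-measurable $\p$-versions of the non-negative random variables $(\gamma\cdot C)_q^{\pm}$, so Lemma~\ref{equi mb} applies to $C_q$ and to $(\gamma\cdot C)_q^{\pm}$, hence also to $(\gamma\cdot C)_q=(\gamma\cdot C)_q^{+}-(\gamma\cdot C)_q^{-}$. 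Combining these countably many applications with Remark~\ref{Remark SA}(ii) applied to the $\p$-a.s.\ event $\{|\gamma|\cdot C_T<\infty\}$ yields a $\p$-null set $N\in\mathscr{F}$ and, for each $\omega\in\complement N$, a $P(\cdot|\mathscr{H})(\omega)$-null set $N_\omega\in\mathscr{F}$, such that for every $\omega^*\in\complement N_\omega$ one has $|\gamma|\cdot C_T<\infty$ at $\omega$ and at $\omega^*$ and $C_q(\omega^*)=C_q(\omega)$, $(\gamma\cdot C)_q(\omega^*)=(\gamma\cdot C)_q(\omega)$ for all $q\in D$.

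The next step is a pathwise identification. Fixing $\omega\in\complement N$ and $\omega^*\in\complement N_\omega$, the equalities $C_q(\omega^*)=C_q(\omega)$ together with the right-continuity of the paths of $C\in\mathscr{V}^+$ force the finite measures $C(\omega^*,\cdot)$ and $C(\omega,\cdot)$ on $(0,T]$ to coincide, since the intervals $(q_1,q_2]$ with $q_1,q_2\in D$ form a $\pi$-system containing $(0,T]$ and generating $\mathscr{B}((0,T])$. Inserting this into $(\gamma\cdot C)_q(\omega^*)=(\gamma\cdot C)_q(\omega)$ gives $\int_{(0,q]}\gamma(\omega^*,u)\,C(\omega,\ud u)=\int_{(0,q]}\gamma(\omega,u)\,C(\omega,\ud u)$ for all $q\in D$; since $|\gamma|\cdot C_T<\infty$ at $\omega$ and $\omega^*$ and $C(\omega^*,\cdot)=C(\omega,\cdot)$, both sides are finite signed measures on $\mathscr{B}((0,T])$, so by the Dynkin-system argument for finite signed measures (valid via the Jordan decomposition) they agree on all of $\mathscr{B}((0,T])$. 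Hence $\gamma(\omega^*,\cdot)=\gamma(\omega,\cdot)$ holds $C(\omega,\ud u)$-a.e.\ on $(0,T]$.

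It then follows, for every $t\in[0,T]$, that
\begin{align*}
\big(k(\gamma)\cdot C_t\big)(\omega^*)
&=\int_{(0,t]}k\big(\gamma(\omega^*,u)\big)\,C(\omega^*,\ud u)
=\int_{(0,t]}k\big(\gamma(\omega,u)\big)\,C(\omega,\ud u)\\
&=\big(k(\gamma)\cdot C_t\big)(\omega),
\end{align*}
using $C(\omega^*,\cdot)=C(\omega,\cdot)$ and then $\gamma(\omega^*,\cdot)=\gamma(\omega,\cdot)$ $C(\omega,\ud u)$-a.e.\ together with the fact that $k\geq0$ is Borel. Thus for each $t\in[0,T]$ and each $\omega\in\complement N$ we get $P\big(k(\gamma)\cdot C_t=(k(\gamma)\cdot C_t)(\omega)\,|\,\mathscr{H}\big)(\omega)=1$, and Lemma~\ref{equi mb} — applied, should $k(\gamma)\cdot C_t$ fail to be real-valued, to the truncations $(k\wedge n)(\gamma)\cdot C_t$ and then passed to the monotone limit — shows that $k(\gamma)\cdot C_t$ has an $\mathscr{H}$-measurable $\p$-version. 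As $t$ was arbitrary, this proves the claim.

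I expect the only real obstacle to be bookkeeping around the sign of $\gamma$: one cannot feed $(\gamma\cdot C)_q$ into Lemma~\ref{equi mb} directly, and one must check that $G\mapsto\int_G\gamma\,\ud C$ is a genuinely finite signed measure before invoking the $\pi$--$\lambda$ theorem. Both points are dispatched as above — passing to positive and negative parts and transporting the $\p$-a.s.\ finiteness of $|\gamma|\cdot C_T$ through the conditional kernel via Remark~\ref{Remark SA}(ii) — after which the argument is verbatim that of Lemma~\ref{ML}.
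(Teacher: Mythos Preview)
Your proof is correct and follows essentially the same approach the paper intends: the paper merely states that ``the same arguments as in the proof of Lemma~\ref{ML}'' yield Lemma~\ref{ML2}, and your write-up is precisely a careful execution of that plan --- use Lemma~\ref{equi mb} on a countable family, identify the measures $C(\omega^*,\cdot)=C(\omega,\cdot)$, then deduce $\gamma(\omega^*,\cdot)=\gamma(\omega,\cdot)$ $C(\omega,\dd u)$-a.e., and conclude. The one point you add beyond the paper's sketch is the handling of the possibly signed $\gamma$ via the positive/negative parts of $(\gamma\cdot C)_q$, which is the right fix and is indeed the only place where the argument for Lemma~\ref{ML2} deviates from that of Lemma~\ref{ML}.
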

	
	\section{The Scope of Standing Assumption \ref{SA} 
	}\label{scope}
	
	We give examples for situations where an \((\F, \p)\)-semimartingale is also a \((\G, \p)\)-semi\-martin\-gale. 
	\begin{example}[SIIs]
		If \(\mathscr{H} \triangleq \{\Omega, \emptyset\}\), then \(\F = \G\) and the \((\G, \p)\)- and \((\F, \p)\)-characteristics of \(X\) coincide.
	\end{example}
	\begin{example}[Independent Integrands]\label{Independent Integrands}
		Let \(Y\) be an \(\mathbb{R}^m\)-valued \cadlag process and \(V\) be an \(\mathbb{R}^n\)-valued \cadlag process which are \(\p\)-independent.
		Define 
		\(
		\mathscr{F}^{o}_t \triangleq \sigma(Y_s, V_s, s \in [0, t]), \mathscr{F}^V_t \triangleq \sigma(V_s, s \in [0, t]),
		\F^V \triangleq \left( \mathscr{F}^V_{t+}\right)_{t \in [0, T]},
		\)
		\(\mathbf{F}^Y\) analogously, and \(\mathscr{H} \triangleq \sigma(Y_s, s \in [0, T])\).
		Lemma \ref{lemma countable generated} yields that the \(\sigma\)-fields \(\mathscr{F}^{o}_t\) and \(\mathscr{H}\) are countably generated.
		Assume that \(V\) is an \((\F^V, \p)\)-semi\-martingale whose \((\F^V, \p)\)-characteristics are denoted by \((B^V(h), C^V, \nu^V)\).
		Let \(\mu \colon \mathbb{D}^m \times \mathbb{R}^+ \to \mathbb{R}^d \otimes \mathbb{R}^n\) be such 
		that \(\mu(\cdot, Y) \triangleq\mu(Y) \in L(V, \F^Y, \p)\). 
		Next, we generalizes \cite[Lemma 2.3]{KMK10} to processes without absolutely continuous character\-istics. 
		\begin{lemma}\label{key lemma}
			The process \(X \triangleq \mu(Y) \cdot V\) is an \(\mathbb{R}^d\)-valued \((\G, \p)\)- and \((\F, \p)\)-semimartingale and its \((\G, \p)\)- and \((\F, \p)\)-characteristics \((B(\tilde{h}), C, \nu)\) associated to a truncation function \(\tilde{h}\) are given by
			\begin{equation}\label{bsp cha}
			\begin{split}
			B(\tilde{h})^i &= \sum_{k \leq n} \mu(Y)^{i, k} \cdot B^{V}(h)^k + \left(\tilde{h}^i(\mu(Y) x) - \sum_{k \leq n}\mu(Y)^{i, k} h^k(x)\right) \star \nu^{V},\\
			C^{i,j} & = \sum_{k, l \leq n} \big(\mu(Y)^{i, k} \mu(Y)^{j, l} \big)\cdot C^{V, k,l},\\
			\nu(\dd t \times G) &=  \int_{\mathbb{R}^n} \1_G(\mu(Y) x) \nu^{V}(\dd t \times \dd x),
			\end{split}
			\end{equation}
			for \(i, j \leq d, G \in \mathscr{B}^d, 0 \not \in G\). 
		\end{lemma}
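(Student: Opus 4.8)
The plan is to reduce the assertion to the case of a \emph{deterministic} integrand by conditioning on $\mathscr{H}$, and then to appeal to the classical transformation rules for the characteristics of a (matrix-valued) stochastic integral. Write $\p_\omega \triangleq \p(\cdot\mid\mathscr{H})(\omega)$ for the regular conditional probability provided by Standing Assumption \ref{SA1}. First I would record that, by Lemma \ref{lemma countable generated}, $\sigma(V_s,s\in[0,T])$ is countably generated; together with the $\p$-independence of $Y$ and $V$ this lets me fix a $\p$-null set off which the restriction of $\p_\omega$ to $\sigma(V_s,s\in[0,T])$ coincides with that of $\p$, while by \eqref{varad} the $\sigma$-field $\mathscr{H}$ is $\p_\omega$-trivial and, applying \eqref{varad} to the events $\{Y_s\in B\}$, $\p_\omega$-a.s.\ the path of $Y$ equals the frozen path $Y_\cdot(\omega)$.

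Fixing such an $\omega$, the next step is to observe that, under $\p_\omega$, the filtrations $\F$ and $\G$ agree with $\F^V$ up to $\p_\omega$-negligible sets — since $Y$ is $\p_\omega$-a.s.\ the deterministic path $Y_\cdot(\omega)$ and $\mathscr{H}$ is $\p_\omega$-trivial — and that, as the $\p_\omega$-law of $V$ equals its $\p$-law, $V$ is hence an $(\F,\p_\omega)$- and a $(\G,\p_\omega)$-semimartingale with the \emph{same} characteristics $(B^V(h),C^V,\nu^V)$ as its $(\F^V,\p)$-ones. At the same time $\mu(Y)$ is $\F^Y$-predictable and $\F^Y\subseteq\F\subseteq\G$, hence $\F$- and $\G$-predictable, and $\p_\omega$-a.s.\ equals the deterministic path $t\mapsto\mu(Y_\cdot(\omega),t)$; moreover Remark \ref{Remark SA}(ii) carries the hypothesis $\mu(Y)\in L(V,\F^Y,\p)$ over to $\p_\omega$ for $\p$-a.e.\ $\omega$, so that the $\p$-defined process $X=\mu(Y)\cdot V$ is, $\p_\omega$-a.s., the stochastic integral of $\mu(Y_\cdot(\omega),\cdot)$ against $V$ under $\p_\omega$.

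Then, under $\p_\omega$, $X$ is the stochastic integral of a deterministic predictable integrand against a semimartingale, so the standard rules for the characteristics of a stochastic integral (cf.\ \cite{JS}) yield that $X$ is an $(\F,\p_\omega)$- and a $(\G,\p_\omega)$-semimartingale whose characteristics are the right-hand sides of \eqref{bsp cha} with $\mu(Y)$ read as $\mu(Y_\cdot(\omega),\cdot)$. Since $\mu(Y)$ is $\mathscr{H}$-measurable (it is $\F^Y$-predictable and $\mathscr{F}^Y_t\subseteq\mathscr{H}$), this is $\p_\omega$-a.s.\ just the value of $\mu(Y)$ on $\Omega$, so \eqref{bsp cha} read on $\Omega$ provides these characteristics $\p_\omega$-a.s. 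Finally, arguing exactly as in the proof of Lemma \ref{lemma kernel} — that is, via \cite[Lemma II.6.13, Corollary II.6.15]{JS} — I would lift the $\omega$-wise statements back to $\p$, obtaining that $X$ is an $(\F,\p)$- and a $(\G,\p)$-semimartingale with characteristics \eqref{bsp cha}; since these expressions do not refer to the choice of $\F$ or $\G$, the two families of characteristics coincide.

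I expect the main obstacle to be the passage through the conditional probabilities: one must make precise that under $\p_\omega$ the enlargements $\F$ and $\G$ of $\F^V$ are $\p_\omega$-negligible and hence leave $V$ a semimartingale with unchanged characteristics — this is exactly where the independence of $Y$ and $V$ enters — and that the $\omega$-dependent exceptional sets can be chosen uniformly, so that the $\omega$-wise characteristics glue into genuine $(\F,\p)$- and $(\G,\p)$-characteristics. The remaining points — predictability of $\mu(Y)$ in the enlarged filtrations and its $V$-integrability under $\p_\omega$ — are routine measurability bookkeeping, and \eqref{bsp cha} is the classical formula for a linear stochastic integral; note that nothing in this argument uses absolute continuity of the characteristics of $V$, which is the point of the present generalization of \cite[Lemma 2.3]{KMK10}. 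Alternatively one could bypass the conditioning and instead invoke that enlarging $\F^V$ by the independent process $Y$, respectively by the independent $\sigma$-field $\mathscr{H}$, preserves the semimartingale property and the characteristics, and then apply the same transformation rule; the conditioning route is preferred here only because it reuses the machinery already set up in the paper.
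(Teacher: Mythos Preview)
Your conditioning route is a genuinely different approach from the paper's, and in fact the paper does exactly what you describe in your final paragraph as the ``alternative'': it argues directly that the independent enlargement of $\F^V$ by $Y$ (resp.\ by $\mathscr{H}$) leaves $V$ a semimartingale with unchanged characteristics --- invoking \cite[Theorem II.2.42]{JS} together with the tower rule and independence --- then notes $\mu(Y)\in L(V,\F,\p)\cap L(V,\G,\p)$ via \cite[Theorem III.6.30]{JS}, and finally reads off \eqref{bsp cha} from \cite[Proposition IX.5.3]{JS}. This is a three-line argument with no conditioning at all.

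Your conditioning approach is plausible for the $(\G,\p)$-half but has a real gap for the $(\F,\p)$-half. The lifting machinery you cite --- Lemma~\ref{lemma kernel} and \cite[Lemma II.6.13, Corollary II.6.15]{JS} --- is formulated for a filtration whose initial $\sigma$-field contains the conditioning $\sigma$-field $\mathscr{H}$. That is satisfied by $\G$ (since $\mathscr{H}\subseteq\mathscr{G}_0$) but \emph{not} by $\F$, so ``arguing exactly as in the proof of Lemma~\ref{lemma kernel}'' does not deliver the $(\F,\p)$-semimartingale statement. One could try to recover it by observing that the candidate characteristics \eqref{bsp cha} are $\F$-predictable and then lifting the relevant $(\F,\p_\omega)$-(local) martingale properties to $(\F,\p)$ by integrating over $\omega$; but making this precise --- in particular handling the localizing sequences and checking that the $\p$-defined integral $\mu(Y)\cdot V$ agrees $\p_\omega$-a.s.\ with the $\p_\omega$-defined one --- is nontrivial and amounts to re-deriving, in disguise, the independence/tower-rule step that the paper uses up front. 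In short: your alternative \emph{is} the paper's proof, and it is both shorter and free of the measurable-selection and local-martingale-lifting issues you yourself flag as obstacles.
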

		\begin{proof}
			Thanks to the inclusions \(\F^V \subseteq \F \subseteq \G\), we deduce from \cite[Theorem II.2.42]{JS}, \cite[Satz 15.5]{bauer2002wahrscheinlichkeitstheorie} and the tower rule that \(V\) is an \((\F, \p)\)- and \((\G, \p)\)-semimartingale which \((\F, \p)\)- and \((\G, \p)\)-characteristics are given by \((B^V(h), C^V, \nu^V)\).
			It follows from the inclusions \(\F^Y \subseteq\F \subseteq \G\) and \cite[Theorem III.6.30]{JS} that \(\mu(Y) \in L(V, \F, \p) \cap L(V, \G, \p)\).
			Hence \(\mu(Y) \cdot V\) is an \((\F, \p)\)- and \((\G, \p)\)-semimartingale, whose \((\F, \p)\)- and \((\G, \p)\)-characteristics are given by \eqref{bsp cha},
			cf. \cite[Proposition IX.5.3]{JS}.
		\end{proof}
		Recalling that \(\mu(Y)\) is \(\F^Y\)-predictable, we obtain the following
		\begin{corollary}\label{SA Coro}
			If \((B^V(h), C^V, \nu^V)\) are deterministic, then~\(X = \mu(Y) \cdot V\) is an \((\mathscr{H}, \F, \p)\)-SII and an \((\F, \p)\)-semimartingale whose \((\F, \p)\)- and \((\G, \p)\)-characteristics coincide and are given by \eqref{bsp cha}.
		\end{corollary}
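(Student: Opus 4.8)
The plan is to read off all three assertions from Lemma~\ref{key lemma} together with the simple observation that, once the characteristics of $V$ are deterministic, the formulas \eqref{bsp cha} exhibit the characteristics of $X = \mu(Y)\cdot V$ as functionals of the path of $Y$ alone. First I would apply Lemma~\ref{key lemma}: since $\mu(Y)\in L(V,\F^Y,\p)$, the process $X$ is a well-defined $(\F,\p)$- and $(\G,\p)$-semimartingale which starts at zero (being a stochastic integral), and its $(\F,\p)$- and $(\G,\p)$-characteristics coincide and are given by \eqref{bsp cha}. This already yields the semimartingale property and the coincidence of the two sets of characteristics, so only the $(\mathscr{H},\F,\p)$-SII property remains, i.e.\ that these common characteristics admit an $\mathscr{H}$-measurable $\p$-version.

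For the last point, recall that $\mu(Y)$ is $\F^Y$-predictable, hence for each $t\in[0,T]$ the path $(\mu(Y)_s)_{s\le t}$ is measurable with respect to $\mathscr{F}^Y_t$, and $\mathscr{F}^Y_t\subseteq\sigma(Y_s,s\in[0,T])=\mathscr{H}$. Since $B^V(h)$, $C^V$ and $\nu^V$ are deterministic, every expression occurring on the right-hand side of \eqref{bsp cha} at a fixed time $t$ --- the Lebesgue--Stieltjes integrals $\mu(Y)^{i,k}\cdot B^V(h)^k_t$ and $(\mu(Y)^{i,k}\mu(Y)^{j,l})\cdot C^{V,k,l}_t$, and the deterministic-kernel integrals $\big(\tilde h^i(\mu(Y)x)-\sum_{k}\mu(Y)^{i,k}h^k(x)\big)\star\nu^V_t$ and $\nu([0,t]\times g)=\int_0^t\!\!\int \indik_{g}(\mu(Y)_s x)\,\nu^V(\dd s\times\dd x)$ for $g\in\mathscr{I}$ --- is, on the $\p$-a.s.\ set where it is finite, a measurable functional of the path $(\mu(Y)_s)_{s\le t}$; this follows by a routine monotone-class argument, integration against a fixed deterministic measure preserving measurability. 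Hence these quantities are $\mathscr{H}$-measurable, so the $(\G,\p)$-characteristics of $X$, being $\p$-a.s.\ equal to the $\F^Y$-predictable (and thus $\mathscr{H}$-measurable) processes in \eqref{bsp cha}, have an $\mathscr{H}$-measurable $\p$-version. Therefore $X$ is an $(\mathscr{H},\F,\p)$-SII, which together with the first paragraph completes the proof.

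The argument is short precisely because determinism of $(B^V(h),C^V,\nu^V)$ collapses the delicate general measurability questions of Appendix~\ref{MLemmata A} into ordinary pathwise functionals of $Y$; the only mild care needed is the bookkeeping that the a priori $\p$-evanescence-defined characteristics inherit honest $\mathscr{H}$-measurable versions, which I expect to be the main --- and only modest --- obstacle.
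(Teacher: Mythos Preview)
Your proposal is correct and follows exactly the approach the paper intends: the paper states the corollary immediately after the one-line remark ``Recalling that \(\mu(Y)\) is \(\F^Y\)-predictable, we obtain the following,'' so its proof is nothing more than Lemma~\ref{key lemma} plus the observation that, with \((B^V(h),C^V,\nu^V)\) deterministic, the formulas \eqref{bsp cha} are \(\F^Y\)-predictable and hence \(\mathscr{H}\)-measurable. Your write-up simply spells out the measurability bookkeeping that the paper leaves implicit.
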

		Corollary \ref{SA Coro} implies that the financial models suggested by \cite{RSSB:RSSB282,Heston01041993,stein1991stock} are exponential \(\mathscr{H}\)-SII models as defined in Section \ref{SPEMM}.
	\end{example}
	\begin{example}[Time-changed L\'evy Models]\label{Time-changed Levy processes}
		We assume that 
		\(V,Y\) and \(\mathscr{H}\) are given as in Example \ref{Independent Integrands} and that \(Y\) is \(\mathbb{R}^+\)-valued.
		Let \(\mu \colon \mathbb{R} \to \mathbb{R}^d\) be a Borel function such that \(\p\)-a.s.
		\(
		|\mu(Y)| \cdot I_T < \infty,
		\)
		where \(\mu(Y)_t \triangleq \mu(Y_t)\).
		Then we set
		\(
		X \triangleq\mu(Y_-) \cdot I + V_{Y_- \cdot I}\) and 
		\(\mathscr{F}^{o}_t \triangleq \sigma(X_s, Y_s, s \in [0, t]).
		\)
		Lemma \ref{lemma countable generated} yields that \(\mathscr{F}^{o}_t\) is countably generated.
		Let 
		\(V\) be an \((\F, \p)\)-L\'evy process with \((\F, \p)\)-L\'evy-Khinchine triplet \((b^{V}(h), c^{V}, F^{V})\).
		The following lemma is a restatement of \cite[Lemma 2.4]{KMK}.
		It shows that the time-changed L\'evy model proposed by \cite{CGMY03} is an exponential \(\mathscr{H}\)-SII model as defined in Section \ref{SPEMM}.
		\begin{lemma}
			\label{remark TC}
			The process \(X\) is an \((\mathscr{H}, \F, \p)\)-SII and an \((\F, \p)\)-semi\-martingale whose \((\G, \p)\)- and \((\F, \p)\)-characteristics coincide and are given by 
			\begin{equation}\label{cha TC}
			\begin{split}
			B(h) &=  \big( \mu(Y_-) + b^{V}(h) Y_-\big)\cdot I,\\ C &= c^{V} Y_-\cdot I,\\ \nu (\dd t \times \dd x) &= Y_{t-} \dd t  F^V(\dd x).
			\end{split}
			\end{equation}
		\end{lemma}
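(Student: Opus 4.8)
The plan is to reduce the assertion to \cite[Lemma~2.4]{KMK}, which establishes precisely the characteristics of a L\'evy process subjected to an independent, continuous, increasing time change, and to read off the \((\mathscr{H},\F,\p)\)-SII property from it via the kernel decomposition in Lemma~\ref{lemma kernel}. Set \(\tau_t \triangleq \int_0^t Y_{s-}\dd s\), so that \(\tau\) is a continuous, increasing, \(\mathbf{F}^Y\)-adapted process and \(X = \mu(Y_-)\cdot I + V\circ\tau\). The hypotheses available are that \(V\) is an \((\F,\p)\)-L\'evy process with triplet \((b^V(h), c^V, F^V)\), that \(V\) and \(Y\) are \(\p\)-independent, and that \(\p\)-a.s.\ \(|\mu(Y)|\cdot I_T < \infty\), the last point guaranteeing that the drift term is well defined and of finite variation.

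By Lemma~\ref{lemma kernel} it suffices to produce a \(\p\)-null set \(N\in\mathscr{F}\) such that for every \(\omega\in\complement N\) the process \(X\) is a \((\{\Omega,\emptyset\},\G,\p(\cdot|\mathscr{H})(\omega))\)-SII whose characteristics are the right-hand sides of \eqref{cha TC}. First I would freeze the path of \(Y\) under \(Q^\omega\triangleq\p(\cdot|\mathscr{H})(\omega)\): since \(\mathscr{H}\) is generated by countably many coordinates of \(Y\) (Lemma~\ref{lemma countable generated}), for \(\omega\) outside the \(\p\)-null set \(N\) from \eqref{varad} the \(\mathscr{H}\)-set \(A_\omega\triangleq\{Y_q = Y_q(\omega)\ \forall q\in\mathbb{Q}_+\cap[0,T]\}\) satisfies \(P(A_\omega|\mathscr{H})(\omega) = \1_{A_\omega}(\omega) = 1\), and by right-continuity of \(Y\) it is contained in \(\{Y_\cdot = Y_\cdot(\omega)\}\); hence \(Q^\omega(Y_\cdot = Y_\cdot(\omega)) = 1\). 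Consequently, under \(Q^\omega\) the time change \(\tau\) equals the fixed continuous increasing function \(\tau^\omega_t\triangleq\int_0^t Y_{s-}(\omega)\dd s\) and the drift equals the fixed continuous function \(t\mapsto\int_0^t\mu(Y_{s-}(\omega))\dd s\). Since \(V\) is \(\p\)-independent of \(\mathscr{H}\), for \(\p\)-a.e.\ \(\omega\) the \(Q^\omega\)-law of \(V\) is its \(\p\)-law; because \(\tau^\omega\) is now deterministic, \(V\circ\tau^\omega\) is a \(Q^\omega\)-process with independent increments relative to \(\G\), and it remains only to compute its characteristics.

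A deterministic, continuous, increasing time change of a L\'evy process has independent increments with characteristics equal to the (linear) L\'evy characteristics composed with the time change, i.e.\ \((b^V(h)\,\tau^\omega,\ c^V\,\tau^\omega,\ \dd\tau^\omega_t\,F^V(\dd x))\); this is standard and is the content of \cite[Lemma~2.4]{KMK} (it may also be checked directly from the conditional characteristic function, using the continuity of \(\tau^\omega\) so that no predictable-compensation correction appears). As \(\tau^\omega\) is absolutely continuous with density \(Y_{\cdot-}(\omega)\), these become \((b^V(h)Y_-(\omega)\cdot I,\ c^V Y_-(\omega)\cdot I,\ Y_{t-}(\omega)\dd t\,F^V(\dd x))\), and adding the deterministic drift \(\mu(Y_-(\omega))\cdot I\) shifts only the first characteristic, giving exactly \eqref{cha TC} at \(\omega\). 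Lemma~\ref{lemma kernel} then yields that \(X\) is an \((\mathscr{H},\F,\p)\)-SII with \((\G,\p)\)-characteristics \eqref{cha TC}. Finally, \(X\) is \(\F\)-adapted by construction of the filtration, hence an \((\F,\p)\)-semimartingale because \(\F\subseteq\G\); since the characteristics in \eqref{cha TC} are built from the \(\mathbf{F}^Y\)-adapted, in particular \(\F\)-predictable, process \(Y_-\), they are also the \((\F,\p)\)-characteristics, exactly as in the transfer step in the proof of Lemma~\ref{key lemma} and in \cite[Lemma~2.4]{KMK}.

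I expect the genuine difficulty to lie not in the characteristic computation but in the filtration bookkeeping: one must ensure that after conditioning on \(\mathscr{H}\) the process \(V\) is still a L\'evy process \emph{with respect to the enlarged filtration \(\G\)} (so that the time-change formula applies at the right level), and that the object obtained from the kernel decomposition is genuinely a \((\G,\p)\)- and \((\F,\p)\)-SII with the stated characteristics rather than merely a semimartingale in some smaller filtration. These are precisely the delicate points addressed in \cite[Lemma~2.4]{KMK} and in the independence discussion of Appendix~\ref{scope}, which I would invoke.
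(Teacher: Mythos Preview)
Your proposal is correct and aligns with the paper's approach: the paper does not give a proof but simply declares the lemma to be a restatement of \cite[Lemma~2.4]{KMK}, and your argument is precisely an unpacking of that citation via the kernel decomposition of Lemma~\ref{lemma kernel}. One minor caution: the step ``\(X\) is \(\F\)-adapted and a \((\G,\p)\)-semimartingale, hence an \((\F,\p)\)-semimartingale'' needs Stricker's theorem (or the characterization \cite[Theorem~II.2.42]{JS} as in the proof of Lemma~\ref{key lemma}), and the coincidence of the \((\F,\p)\)- and \((\G,\p)\)-characteristics then follows from the \(\F\)-predictability of \eqref{cha TC} together with uniqueness of characteristics---you gesture at this correctly in your final sentence.
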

	\end{example}
\bibliographystyle{agsm}
\bibliography{References}

\end{document}